\documentclass[a4paper,11pt]{article}

\usepackage{stmaryrd}
%\usepackage{showlabels}
%%%%%%%%%%%%%%%%%% PACKAGES %%%%%%%%%%%%%
 
\usepackage[T1]{fontenc}
\usepackage[utf8]{inputenc}
\usepackage{lipsum}
\usepackage{amsmath, amssymb, mathtools, amsthm}
\usepackage{comment}
\mathtoolsset{showonlyrefs}

\usepackage[usenames,dvipsnames]{xcolor}
\usepackage{graphicx}
\usepackage{minitoc}
\usepackage{tikz}
\usepackage{enumerate}
\usepackage{filecontents}
\usepackage[margin=0.96 in]{geometry}
\usepackage{bbm}

\usepackage[numbers,sort&compress]{natbib}
\usepackage[colorlinks=true]{hyperref}
\hypersetup{urlcolor=blue, citecolor=blue, linkcolor=blue}
\usepackage{mathtools}
\usepackage{float}
\usepackage{xspace}
\usepackage{wrapfig}

 \usepackage{epsfig}
 \usepackage{subcaption}
 \usepackage{multirow}
 \usepackage{lineno}
 \usepackage{fullpage}
 \usepackage[normalem]{ulem} 
 \usepackage{makeidx}

%%%%%%%%%%%%%%%%%% PACKAGES %%%%%%%%%%%%%%%%%%%%%

\usepackage[title]{appendix}
\usepackage{dsfont}

%%%%%%%%%%%%%%%%%% MACROS ---------------------->

%%% Bracket-constructions %%%
\DeclarePairedDelimiter{\abs}{\lvert}{\rvert}
\DeclarePairedDelimiter{\norm}{\lVert}{\rVert}

\DeclarePairedDelimiter{\prt}{(}{)}

\newcommand \commentout[1] {}
%%% Math operators

\DeclareMathOperator*{\supp}{\operatorname{supp}}
\DeclareMathOperator*{\tr}{\operatorname{tr}}

%%% Differential operators %%%
\newcommand{\partialt}[1]{\dfrac{\partial#1}{\partial t}}

\DeclareMathAlphabet{\mathup}{OT1}{\familydefault}{m}{n}
\newcommand{\dx}[1]{\mathop{}\!\mathup{d} #1}

\newcommand{\discuss}[1]{{\color{red}#1}}
%%%%%%%%%%%%%%%%%% THEOREMS ---------------------->
\theoremstyle{plain}
\newtheorem{thm}{Theorem}[section]
\newtheorem{lemma}[thm]{Lemma}

\newtheorem{corollary}[thm]{Corollary}

\theoremstyle{remark}
\newtheorem{remark}[thm]{\bf Remark}

%----------------> THEOREMS %%%%%%%%%
\newcommand{\ie}{\emph{i.e.}\;}
\newcommand{\cf}{\emph{cf.}\;}

%%% new commands for variables
% \renewcommand{\i}{^{(i)}}
% \newcommand{\1}{^{(1)}}
% \newcommand{\2}{^{(2)}}

\newcommand{\ddt}{\frac{\dx{}}{\dx{t}}}

\newcommand{\grad}{\nabla}

\newcommand{\g}{_\gamma}
\newcommand{\e}{_\epsilon}
\newcommand{\ep}{_{\epsilon'}}
\newcommand{\gp}{_{\gamma'}}
\newcommand{\gmax}{g_{\mathrm{max}}}

\newcommand{\R}{\mathbb{R}}

\newcommand\blfootnote[1]{%
  \begingroup
  \renewcommand\thefootnote{}\footnote{#1}%
  \addtocounter{footnote}{-1}%
  \endgroup
}

%%%%%%%%%%%%%%%%%%%%%%%%%%%%%%%%%%%%%%
%%%%%%%%%%%%%%%%%%%%%%%%%%%%%%%%%%%%%%
\begin{document}

\title{Convergence rate for the incompressible limit of nonlinear diffusion-advection equations
}

\author{Noemi David\thanks{Sorbonne Universit{\'e}, Inria, CNRS, Universit\'{e} de Paris, Laboratoire Jacques-Louis Lions UMR7598, F-75005 Paris, France.} \thanks{ Dipartimento di Matematica, Universit\'a di Bologna, Italy.}
\and{Tomasz D\k{e}biec\footnotemark[1]}
\and{Beno\^it Perthame\footnotemark[1]}}

\maketitle
\begin{abstract}
The incompressible limit of nonlinear diffusion equations of porous medium type has attracted a lot of attention in recent years, due to its ability to link the weak formulation of cell-population models to free boundary problems of Hele-Shaw type. Although vast literature is available on this singular limit, little is known on the convergence rate of the solutions. In this work, we compute the convergence rate in a negative Sobolev norm and, upon interpolating with $BV$-uniform bounds, we deduce a convergence rate in appropriate Lebesgue spaces.
\end{abstract}

\vskip .4cm
\begin{flushleft}
    \noindent{\makebox[1in]\hrulefill}
\end{flushleft}
	2010 \textit{Mathematics Subject Classification.} 35K57; 35K65; 35Q92; 35B45; %76N10;  76T99; 
	\newline\textit{Keywords and phrases.} Porous medium equation; Incompressible limit; Rate of convergence; Free boundary; Hele-Shaw problem.\\[-2.em]
\begin{flushright}
    \noindent{\makebox[1in]\hrulefill}
\end{flushright}
\vskip .4cm

\blfootnote{{Email addresses:} {noemi.david@sorbonne-universite.fr}, {tomasz.debiec@sorbonne-universite.fr},\\ {benoit.perthame@sorbonne-universite.fr}}

\section{Introduction}
We consider the following nonlinear drift-diffusion equation
\begin{equation}\label{eq: equation n}
    \partialt{n} - \nabla \cdot (n \nabla p + n\nabla V) = n g,
\end{equation}
posed on $(0,T)\times\R^d$, $d\geq2$, where $n$ describes a population density and $p=p(n)$ is the density dependent pressure. The reaction term on the right-hand side represents the population growth rate, $g=g(t,x)$, while $V=V(t,x)$ is a chemical concentration.
The pressure is assumed to be a known increasing function of the density. We consider the following two representative examples:
\begin{alignat}{2}
        p_\gamma=P_\gamma(n)&:=\frac{\gamma}{\gamma-1} n^{\gamma-1},\;\; && \gamma >1,\label{eq: power law} \\
        \shortintertext{and}
        p_\epsilon =P_\epsilon(n)&:= \epsilon \frac{n}{1-n}, && \epsilon>0.\label{eq: singular law}
\end{alignat}
% \begin{equation}\label{eq: power law}
%     p_\gamma=P_\gamma(n):=\frac{\gamma}{\gamma-1} n^{\gamma-1}, \qquad \gamma >1,
% \end{equation}
% and
% \begin{equation}
%     \label{eq: singular law}
%     p_\epsilon =P_\epsilon(n):= \epsilon \frac{n}{1-n}, \qquad \epsilon>0.
% \end{equation}
%the pressure-penalised growth rate,  
%While we are going to consider only the special cases $G=0$ and $G(p)=a(t,x)$, we leave the reaction term in Eq.~\eqref{eq: equation n} in more generality for the sake of discussion.  
We are concerned with calculating the rate at which solutions to Eq.~\eqref{eq: equation n} converge to the so-called \emph{incompressible} (or \emph{stiff pressure}) limit, as described below. More precisely we prove the following results.
\begin{thm}[Convergence rate in $\dot{H}^{-1}$]
\label{thm: main_negative norm}
    Assume~\eqref{eq: assumptions data L1},~\eqref{eq: assumption V potential}, and~\eqref{eq: assumption a lap} (for $d=2$) and~\eqref{eq: assumption growth 2} or~\eqref{eq: assumption a grad} (for $d\geq3$). For $d=2$ assume additionally~\eqref{eq: data in d=2}. Then, for all $T>0$, there exists a unique function 
    %$n_\infty\in C([0,T);L^1(\R^d))\cap BV((0,T)\times\R^d)$ 
    $n_\infty\in C([0,T);L^1(\R^d))$ 
    such that the sequence $n\g$ (resp.\ $n\e$) converges, as $\gamma\to\infty$ (resp. $\epsilon\to 0$) to $n_\infty$ strongly in $L^\infty(0,T;\dot{H}^{-1}(\R^d))$ with the following rate
    % \begin{equation}
    % \label{eq: rate in H-1}
    %     \|n\g(t)-n_\infty(t)\|_{\dot{H}^{-1}(\R^d)} \leq\frac{C\sqrt{t}}{{\gamma}^{1/2}} + \norm{n\g^0-n_\infty^0}_{\dot{H}^{-1}(\R^d)}.
    % \end{equation}
    \begin{equation}
    \label{eq: rate in H-1}
        \sup_{t\in[0,T]} \|n\g(t) - n_\infty(t)\|_{\dot{H}^{-1}(\R^d)} \leq\frac{C(T)}{{\gamma}^{1/2}} + \norm{n\g^0-n_\infty^0}_{\dot{H}^{-1}(\R^d)}.
    \end{equation}    
\end{thm}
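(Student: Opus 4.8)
The plan is to prove~\eqref{eq: rate in H-1} by a Gronwall argument carried out directly in $\dot H^{-1}$ on the difference $w:=n\g-n_\infty$ (the case of the law~\eqref{eq: singular law}, with $w:=n\e-n_\infty$, being completely analogous). I will take for granted the by-now classical qualitative facts about the incompressible limit: the uniform bounds $0\le n\g$ and $\|p\g\|_{L^\infty}\le p_{\max}$ --- whence $n\g\le 1+C/\gamma$ and $\|n\g^{\gamma}\|_{L^\infty}\le C$ --- the uniform $L^1\cap L^\infty$ and $BV$ bounds, the strong $L^1$ compactness $n\g\to n_\infty$ and $p\g\rightharpoonup p_\infty$, and the complementarity relation $p_\infty(1-n_\infty)=0$; these provide both the existence of $n_\infty\in C([0,T);L^1)$ and every ingredient used below, the genuinely new point being the quantitative rate. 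Recasting the equation in porous-medium form --- for~\eqref{eq: power law} one has the identity $n\nabla p\g=\nabla(n^{\gamma})$, and likewise $n\nabla p\e=\nabla r\e(n)$ with $r\e(n):=\int_0^n sP_\epsilon'(s)\,\dx{s}$ --- both $n\g$ and $n_\infty$ solve $\partial_t u=\Delta\Pi(u)+\nabla\cdot(u\nabla V)+ug$, with $\Pi(u)=u^{\gamma}$, resp.\ $\Pi(n_\infty)=p_\infty$. Setting $\psi:=(-\Delta)^{-1}w$, so that $\|w\|_{\dot H^{-1}}^2=\int|\nabla\psi|^2=\int w\psi$, one differentiates to obtain
\[
\frac12\,\ddt\,\|w(t)\|_{\dot H^{-1}}^2=-\int w\,(n\g^{\gamma}-p_\infty)\;-\;\int w\,\nabla\psi\cdot\nabla V\;+\;\int w\,g\,\psi .
\]

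The heart of the matter is the diffusion term. Using complementarity in the form $\int n_\infty p_\infty=\int p_\infty$, one rewrites
\[
-\int w\,(n\g^{\gamma}-p_\infty)=\int n\g^{\gamma}\,(n_\infty-n\g)+\int p_\infty\,(n\g-1).
\]
On $\{n\g\ge n_\infty\}$ the first integrand is non-positive; on $\{n\g<n_\infty\}$ one has $n\g<1$, hence $0\le n\g^{\gamma}(n_\infty-n\g)\le n\g^{\gamma}-n\g^{\gamma+1}\le\sup_{s\in[0,1]}(s^{\gamma}-s^{\gamma+1})\le\frac1{\gamma+1}$, and splitting that set at $n\g=\tfrac12$ --- where $n\g^{\gamma}\le2^{-\gamma}$, the contribution being dominated by $\|n_\infty\|_{L^1}$, against $n\g\ge\tfrac12$, a set of measure $\le 2\|n\g\|_{L^1}$ --- gives $\int n\g^{\gamma}(n_\infty-n\g)\le C/\gamma$. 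For the second integral, $p_\infty\le p_{\max}$ and $n\g\le 1+C/\gamma$ give $\int p_\infty(n\g-1)\le p_{\max}\int(n\g-1)_+\le C/\gamma$. Thus the diffusion term is bounded by $C/\gamma$, with $C$ independent of $\gamma$. (For~\eqref{eq: singular law} the same scheme applies verbatim using $n\e<1$ and $\sup_{s\in[0,1)}r\e(s)(1-s)\le C\epsilon$, with the rate $\epsilon^{1/2}$.)

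The drift and reaction contributions are of lower order. Integrating by parts twice, $-\int w\,\nabla\psi\cdot\nabla V=\frac12\int|\nabla\psi|^2\Delta V-\int\partial^2_{ij}V\,\partial_i\psi\,\partial_j\psi$, which is $\le C\|V\|_{W^{2,\infty}}\|w\|_{\dot H^{-1}}^2$. For the reaction term, $\int w\,g\,\psi=\int g\,|\nabla\psi|^2+\int\psi\,\nabla\psi\cdot\nabla g$: the first piece is $\le\|g\|_{L^\infty}\|w\|_{\dot H^{-1}}^2$, and the second is $\le\|\nabla\psi\|_{L^2}\|\psi\,\nabla g\|_{L^2}$, which for $d\ge3$ is controlled by the Sobolev embedding $\|\psi\|_{L^{2d/(d-2)}}\le C\|\nabla\psi\|_{L^2}$ together with $\nabla g\in L^{d}$ --- ensured by~\eqref{eq: assumption growth 2} or~\eqref{eq: assumption a grad} --- and is again $\le C\|w\|_{\dot H^{-1}}^2$; in $d=2$ this embedding fails, which is exactly what hypotheses~\eqref{eq: assumption a lap} and~\eqref{eq: data in d=2} are there to compensate (the latter also ensuring that $(-\Delta)^{-1}w$, and hence the $\dot H^{-1}$ norm, is well defined along the flow). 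Collecting the three estimates gives
\[
\frac12\,\ddt\,\|w(t)\|_{\dot H^{-1}}^2\le\frac{C}{\gamma}+C\,\|w(t)\|_{\dot H^{-1}}^2,
\]
and Gronwall's inequality yields $\|w(t)\|_{\dot H^{-1}}^2\le e^{Ct}(\|w(0)\|_{\dot H^{-1}}^2+Ct/\gamma)$; taking square roots (and absorbing the factor $e^{CT/2}$ in front of the initial datum into $C(T)$) gives~\eqref{eq: rate in H-1}. Uniqueness of $n_\infty$ then follows by applying the same estimate to two candidate limits sharing the initial datum $n_\infty^0$.

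The main obstacle, beyond invoking these a priori bounds --- of which the uniform pressure bound $\|p\g\|_{L^\infty}\le p_{\max}$ is the true linchpin, as it is what confines $n\g$ below $1$ up to an $O(1/\gamma)$ error and keeps $n\g^{\gamma}$ bounded --- is handling the zeroth- and first-order terms $g$, $V$ without spoiling the clean $C/\gamma$ coming from the diffusion term. The $\dot H^{-1}$ framework degenerates in dimension $2$, and the reaction term requires integrability of $\nabla g$ tuned to the dimension; this is the origin of the dimensional dichotomy in the hypotheses.
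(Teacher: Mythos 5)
Your $\dot H^{-1}$ energy identity and your handling of the drift and reaction terms are structurally the same as the paper's; the genuine difference is the comparison object. You compare $n_\gamma$ directly with the limit $n_\infty$, which forces you to import the whole qualitative theory of the incompressible limit as a black box: existence of $n_\infty$, the limit equation involving $p_\infty$ (the weak$^*$ limit of $p_\gamma$), the bound $n_\infty\le 1$, and above all the relation \eqref{eq: relation}, which is the linchpin of your estimate $-\int w\,(n_\gamma^\gamma-p_\infty)\le C/\gamma$. This is where the gap lies: under the hypotheses of Theorem~\ref{thm: main_negative norm} these facts are not available off the shelf. The known derivations of the limit equation and of $p_\infty(1-n_\infty)=0$ rely on strong $L^1$ compactness of the densities, which here would require the $BV$ bounds of Lemma~\ref{lemma: a-priori}, i.e.\ the additional assumptions \eqref{eq: assumptions data BV} and \eqref{eq: assumption V BV} reserved for Theorem~\ref{thm: main_strong norm}; and the drift--reaction literature assumes $g=G(p)$ monotone and/or $D^2V\in L^\infty$, not merely \eqref{eq: assumption V potential} and $g=g(t,x)$. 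The paper deliberately avoids all of this: it compares two solutions $n_\gamma$, $n_{\gamma'}$, shows the family is Cauchy in $L^\infty(0,T;\dot H^{-1})$ with defect $C(1/\gamma+1/\gamma')$ — your bound on $\int n_\gamma^\gamma(n_\infty-n_\gamma)$ is exactly the paper's bound on $\int \tilde n_\gamma^{\gamma}(1-\tilde n_\gamma)$, with the $\gamma'$-solution playing the role your $p_\infty$ plays — obtains $n_\infty$, uniqueness and the rate by letting $\gamma'\to\infty$, and only afterwards deduces \eqref{eq: relation} (Theorem~\ref{thm: relation}) from the rate estimate. Taking \eqref{eq: relation} as an input is therefore circular relative to the paper's program; your route is a legitimate alternative if one is willing to assume the qualitative limit theory under strengthened hypotheses, but as a proof of Theorem~\ref{thm: main_negative norm} as stated it is not self-contained.

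Two hypothesis mismatches in the lower-order terms also need fixing. For the drift you conclude with $C\|V\|_{W^{2,\infty}}\|w\|_{\dot H^{-1}}^2$, but Theorem~\ref{thm: main_negative norm} does not assume $D^2V\in L^\infty$; after your double integration by parts you should instead use \eqref{eq: assumption V potential} on the quadratic form, which gives $\frac12\int|\nabla\psi|^2\Delta V-\int\nabla\psi^{T}D^2V\,\nabla\psi\le-\lambda\int|\nabla\psi|^2$, and this suffices for Gronwall. For the reaction term with $d\ge3$, assumption \eqref{eq: assumption growth 2} does not give $\nabla g\in L^d$ — it only controls $|\Delta g|_-$ in $L^{d/2}$ — so in that case you must integrate by parts once more and bound $-\frac12\int|\psi|^2\Delta g\le\frac12\|\psi\|_{L^{2d/(d-2)}}^2\,\||\Delta g|_-\|_{L^{d/2}}\le C\int|\nabla\psi|^2$, keeping your Young--H\"older--Sobolev argument for \eqref{eq: assumption a grad} only; note also that only $g_+$ is assumed bounded, so the zeroth-order piece should be estimated through $\|g_+\|_{L^\infty}$. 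Finally, the two-dimensional case (square-integrability of $\nabla\psi$ via \eqref{eq: data in d=2} and the logarithmic Hardy--Littlewood--Sobolev inequality, and the sign of $-\frac12\int|\psi|^2\Delta g$ under \eqref{eq: assumption a lap}) is only gestured at and should be written out, as in Appendix~\ref{sec: a-priori for d=2}.
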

\begin{thm}[Convergence rate in $L^{4/3}$]
\label{thm: main_strong norm}
    %Under the same assumptions as in Theorem~\ref{thm: main_negative norm} we also have
    Under the assumptions of Theorem~\ref{thm: main_negative norm}, and additionally~\eqref{eq: assumptions data BV},~\eqref{eq: assumption V BV} and $g\in BV((0,T)\times\R^d)$, we also have $n_\infty\in BV((0,T)\times\R^d)$ and
    % \begin{equation}
    % \label{eq: rate in L4/3}
    %     \norm{n\g(t)-n_\infty(t)}_{L^{4/3}(\R^d)} \leq \frac{Ct^{1/4}}{\gamma^{1/4}} + \norm{n\g^0-n_\infty^0}_{\dot{H}^{-1}(\R^d)}^{1/2}.
    % \end{equation}  
    \begin{equation}
    \label{eq: rate in L4/3}
        \sup_{t\in[0,T]}\,\norm{n\g(t)-n_\infty(t)}_{L^{4/3}(\R^d)} \leq \frac{C(T)}{\gamma^{1/4}} +\norm{n\g^0-n_\infty^0}_{\dot{H}^{-1}(\R^d)}^{1/2}.
    \end{equation}    
\end{thm}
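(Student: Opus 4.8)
The plan is to upgrade the negative-norm rate of Theorem~\ref{thm: main_negative norm} to an $L^{4/3}$ rate by interpolating it against uniform $BV$ bounds; I describe the case $\gamma\to\infty$, the case $\epsilon\to0$ being identical. First I would establish that, under the extra hypotheses \eqref{eq: assumptions data BV}, \eqref{eq: assumption V BV} and $g\in BV((0,T)\times\R^d)$, the family $(n\g)_\gamma$ satisfies a uniform spatial $BV$ bound. Working on a regularisation of \eqref{eq: equation n}, one differentiates in $x_i$, multiplies by $\sign(\partial_{x_i}n\g)$ and sums over $i$: the diffusion term is dissipative, the drift contributes terms controlled by $\|D^2V\|$ (against $\|n\g\|_{L^1}$) and $\|\nabla V\|_{L^\infty}$ (against $|Dn\g|$), and the reaction contributes terms controlled by $|Dg|$ (against $\|n\g\|_{L^\infty}$) and $\|g\|_{L^\infty}$ (against $|Dn\g|$). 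A Gr\"onwall argument then yields $\sup_{t\le T}|Dn\g(t)|(\R^d)\le C(T)$ with $C(T)$ independent of $\gamma$.

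For the time variable I would bound $\partial_t n\g$ in $L^\infty(0,T;L^1(\R^d))$, either by the same computation after differentiating \eqref{eq: equation n} in $t$, or directly from the equation once the spatial $BV$, $L^1$ and $L^\infty$ bounds are in place; together with the previous step this gives $\|n\g\|_{BV((0,T)\times\R^d)}\le C(T)$. Since Theorem~\ref{thm: main_negative norm} already gives $n\g\to n_\infty$ in $L^\infty(0,T;\dot H^{-1}(\R^d))$, hence in the sense of distributions on $(0,T)\times\R^d$, weak-$*$ compactness of measures and lower semicontinuity of the total variation transfer these bounds to the limit: $n_\infty\in BV((0,T)\times\R^d)$, with $|Dn_\infty(t)|(\R^d)\le C(T)$ for a.e.\ $t$. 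This is the $BV$ assertion of the theorem.

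The analytic core is then the homogeneous Gagliardo--Nirenberg inequality
\[
\|f\|_{L^{4/3}(\R^d)}\ \le\ C_d\,\|f\|_{\dot H^{-1}(\R^d)}^{1/2}\,\bigl(|Df|(\R^d)\bigr)^{1/2},\qquad f\in BV(\R^d)\cap\dot H^{-1}(\R^d),
\]
valid for every $d\ge2$; the exponent $4/3$ is exactly the one for which the two sides scale identically, and for smooth $f$ --- then for $BV$ functions by mollification --- the inequality follows from a Littlewood--Paley decomposition together with Bernstein's inequalities (it is the $\theta=1/2$ interpolant between $\dot H^{-1}$ and the $\dot W^{1,1}$-type seminorm $|Df|(\R^d)$). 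Applying it to $f=n\g(t)-n_\infty(t)$ and using $|D(n\g(t)-n_\infty(t))|(\R^d)\le|Dn\g(t)|(\R^d)+|Dn_\infty(t)|(\R^d)\le 2C(T)$ gives
\[
\|n\g(t)-n_\infty(t)\|_{L^{4/3}(\R^d)}\ \le\ C(T)\,\|n\g(t)-n_\infty(t)\|_{\dot H^{-1}(\R^d)}^{1/2};
\]
taking the supremum over $t\in[0,T]$, inserting \eqref{eq: rate in H-1} and using $\sqrt{a+b}\le\sqrt a+\sqrt b$ yields \eqref{eq: rate in L4/3}.

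\textbf{Main obstacle.} The only substantial step is the uniform-in-$\gamma$ $BV((0,T)\times\R^d)$ estimate, especially its time-regularity part, which must be obtained with constants that remain bounded as $\gamma\to\infty$ despite the degeneracy of the diffusion and the moving free boundary of the limiting problem. Once it is available, the interpolation and the final assembly are routine.
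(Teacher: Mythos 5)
Your proposal is correct and follows essentially the same route as the paper: a uniform-in-$\gamma$ $BV$ bound (which the paper takes from Lemma~\ref{lemma: a-priori}, citing standard references rather than redoing the sign/Gr\"onwall computation you sketch), combined with the interpolation inequality $\norm{f}_{L^{4/3}}\le C\,\abs{f}_{BV}^{1/2}\norm{f}_{\dot H^{-1}}^{1/2}$ of Lemma~\ref{lemma: interpolation} applied to $f=n\g(t)-n_\infty(t)$, then the $\dot H^{-1}$ rate of Theorem~\ref{thm: main_negative norm}. The only cosmetic difference is that you derive the interpolation inequality via Littlewood--Paley/Bernstein rather than citing Cohen et al., which changes nothing in substance.
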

%\ND{I put it here, but of course feel free to re-arrange:}

\begin{thm}
    \label{thm: relation} 
%Assume~\eqref{eq: assumptions data L1},~\eqref{eq: assumption V potential}, and~\eqref{eq: assumption a lap} (for $d=2$) and \eqref{eq: assumption growth 2} or \eqref{eq: assumption a grad} (for $d\ge 3$).  
 Under the assumptions of Theorem~\ref{thm: main_negative norm}, there exists a function $p_\infty\in L^\infty((0,T)\times\R^d)$ such that, after extracting a subsequence, the sequence $p_\gamma$ converges to $p_\infty$ weakly$^*$ in $L^\infty((0,T)\times \R^d)$ and the following relation holds 
\begin{equation}
    \label{eq: relation}
    p_\infty(1 - n_\infty) =0,
\end{equation}
almost everywhere in $(0,T)\times \R^d$.
\end{thm}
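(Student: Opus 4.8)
The plan is to combine three ingredients: the uniform a priori bounds available under the standing assumptions, a quantitative estimate showing that the \emph{complementarity defect} $p\g(1-n\g)$ is small in $L^1$, and the strong $\dot H^{-1}$ convergence provided by Theorem~\ref{thm: main_negative norm}. As a first step I would recall the available bounds: $0\le p\g\le p_M$ for a constant $p_M$ independent of $\gamma$ (from the maximum principle for the pressure equation; this forces $0\le n\g\le 1+\delta\g$ with $\delta\g\to0$ in the power-law case~\eqref{eq: power law}, and $0\le n\e<1$ in the singular case~\eqref{eq: singular law}), the uniform mass bound $\sup_{t\in[0,T]}\norm{n\g(t)}_{L^1(\R^d)}\le C$, and the basic energy estimate
\[
\iint_{(0,T)\times\R^d} n\g\,\abs{\nabla p\g}^2\,\dx{x}\,\dx{t}\le C,
\]
obtained by testing~\eqref{eq: equation n} with $p\g$ and using~\eqref{eq: assumption V potential}. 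Along a subsequence, $p\g\rightharpoonup p_\infty$ weakly-$\ast$ in $L^\infty((0,T)\times\R^d)$ with $p_\infty\ge0$; since the $\dot H^{-1}$-limit $n_\infty$ of Theorem~\ref{thm: main_negative norm} is also the only possible weak-$\ast$ $L^\infty$ limit of $(n\g)$, we may assume $n\g\rightharpoonup n_\infty$ weakly-$\ast$ in $L^\infty$, and passing to the limit in $n\g\le 1+\delta\g$ gives $0\le n_\infty\le1$ a.e.

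The core of the argument is the claim that $\norm{p\g(1-n\g)}_{L^1((0,T)\times\R^d)}\to0$ (and likewise for $p\e(1-n\e)$ as $\epsilon\to0$). In the singular case this is immediate, since $p\e(1-n\e)=\epsilon\,n\e$, whence the $L^1$ norm is at most $\epsilon\,T\sup_t\norm{n\e(t)}_{L^1}\to0$. In the power-law case I would write $p\g(1-n\g)=\tfrac{\gamma}{\gamma-1}\,n\g^{\gamma-1}(1-n\g)$ and split $(0,T)\times\R^d$ according to the size of $n\g$: on $\{n\g>1\}$ one has $\abs{p\g(1-n\g)}\le p_M(n\g-1)\le p_M\,\delta\g$ while $\abs{\{n\g>1\}}\le T\sup_t\norm{n\g(t)}_{L^1}$; on $\{n\g\le\tfrac12\}$ one uses $n\g^{\gamma-1}(1-n\g)\le 2^{-(\gamma-2)}n\g$ together with the mass bound; and on $\{\tfrac12<n\g\le1\}$ one uses the elementary estimate $\max_{s\in[0,1]}s^{\gamma-1}(1-s)=O(1/\gamma)$ together with $\abs{\{n\g>\tfrac12\}}\le 2T\sup_t\norm{n\g(t)}_{L^1}$. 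Each contribution tends to $0$. Since moreover $p\g(1-n\g)$ is bounded in $L^\infty$, it follows that the product $n\g p\g=p\g-p\g(1-n\g)$ converges weakly-$\ast$ in $L^\infty$ to $p_\infty$.

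It remains to pass to the limit in a product, which is where the $\dot H^{-1}$ estimate enters. Fix $\phi\in C_c^\infty((0,T)\times\R^d)$. The point is that, unlike $p\g\phi$, the function $(n\g p\g)\phi$ is bounded in $L^2(0,T;\dot H^1(\R^d))$: indeed $\nabla(n\g p\g)=\tfrac{\gamma}{\gamma-1}\,n\g\nabla p\g$ in the power-law case and $\nabla(n\e p\e)=n\e(2-n\e)\nabla p\e$ in the singular case, so in both situations $\abs{\nabla(n\g p\g)}^2\le C\,n\g\abs{\nabla p\g}^2$, which is integrable by the energy estimate; and its weak limit is necessarily $p_\infty\phi$, since $n\g p\g\rightharpoonup p_\infty$ in the sense of distributions. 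On the other hand Theorem~\ref{thm: main_negative norm} gives $n\g\to n_\infty$ strongly in $L^\infty(0,T;\dot H^{-1})\subset L^2(0,T;\dot H^{-1})$. Passing to the limit in the duality pairing between $L^2(0,T;\dot H^{-1})$ and $L^2(0,T;\dot H^1)$ (strong $\times$ weak),
\[
\iint n\g^2\, p\g\,\phi\,\dx{x}\,\dx{t}=\int_0^T\big\langle n\g(t),\,(n\g p\g)(t)\,\phi(t)\big\rangle\,\dx{t}\ \longrightarrow\ \int_0^T\big\langle n_\infty(t),\,(p_\infty\phi)(t)\big\rangle\,\dx{t}=\iint n_\infty\,p_\infty\,\phi\,\dx{x}\,\dx{t}.
\]
At the same time $\iint\abs{n\g^2 p\g-n\g p\g}\le C\norm{p\g(1-n\g)}_{L^1}\to0$ by the defect estimate, while $\iint n\g p\g\,\phi\to\iint p_\infty\phi$ by the defect estimate together with $p\g\rightharpoonup p_\infty$. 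Equating the two limits gives $\iint p_\infty(1-n_\infty)\,\phi=0$ for every $\phi\in C_c^\infty((0,T)\times\R^d)$, which is precisely~\eqref{eq: relation}.

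I expect the main obstacle to be the defect estimate: it is elementary but needs the case distinction and the uniform $L^1$-mass bound (and, in the power-law case, the precise constant in $\max_{[0,1]}s^{\gamma-1}(1-s)$). The second, more conceptual, point is that one must pair $n\g$ with $(n\g p\g)\phi$ rather than with $p\g\phi$, since $\nabla p\g$ is \emph{not} controlled in $L^2$ by the basic energy estimate alone — only $\sqrt{n\g}\,\nabla p\g$ is — so it is the product $n\g p\g$, whose gradient equals $\tfrac{\gamma}{\gamma-1}n\g\nabla p\g$, that furnishes a bounded family in $L^2(0,T;\dot H^1(\R^d))$. If one is willing to invoke instead a $BV$ or Aronson--B\'enilan bound (which yields $\Delta p\g\ge -C$ and hence $p\g$ bounded in $L^2(0,T;H^1_{\mathrm{loc}})$), one may test directly with $p\g\phi$ and the argument shortens. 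The extraction of subsequences and the case $d=2$ are handled as in Theorem~\ref{thm: main_negative norm}.
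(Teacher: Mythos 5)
Your proof is correct in its main lines, but it takes a genuinely different route from the paper's. The paper never introduces the defect $p\g(1-n\g)$ nor a strong--weak pairing: it returns to the computation of Section~\ref{sec: power law}, keeps the cross terms $c\g^{\gamma-1}\tilde n\g^{\gamma}(1-\tilde n\gp)$ and $c\gp^{\gamma'-1}\tilde n\gp^{\gamma'}(1-\tilde n\g)$ that were discarded there as non-positive contributions when deriving the rate, observes that after time integration they are controlled by $C(T)\prt*{\tfrac1\gamma+\tfrac1{\gamma'}}+\norm{\nabla(\varphi\g-\varphi\gp)(0)}_{L^2(\R^d)}^2$, and then lets $\gamma'\to\infty$ and $\gamma\to\infty$ in turn, using only weak$^*$ convergence of $n\gp$ and of $p\g$ together with weak lower semicontinuity of convex functionals; no bound on $\nabla p\g$ enters at any point. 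Your route buys two things: the defect estimate $\norm{p\g(1-n\g)}_{L^1((0,T)\times\R^d)}\to0$ is completely elementary (it uses only the uniform $L^\infty$ bound on $p\g$ and the $L^1$ bound on $n\g$, not the equation), and pairing the strong $L^2_t\dot H^{-1}_x$ convergence of $n\g$ from Theorem~\ref{thm: main_negative norm} against the weak $L^2_t\dot H^{1}_x$ convergence of $(n\g p\g)\phi$ is a transparent way to pass to the limit in a product of two weakly convergent factors.

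The one point you should not gloss over is the energy estimate $\iint n\g\abs{\nabla p\g}^2\le C$, on which your pairing step hinges. It is not among the bounds of Lemma~\ref{lemma: a-priori} and is not used anywhere in the paper. Deriving it by testing with $p\g$ requires controlling $\int n\g\,\nabla V\cdot\nabla p\g$ and $\int n\g\, g\, p\g$: assumption~\eqref{eq: assumption V potential} does not make $\nabla V$ bounded, and under~\eqref{eq: assumption growth 2} the rate $g$ need not be bounded above, so under the literal hypotheses of Theorem~\ref{thm: main_negative norm} this bound needs an extra argument (finite propagation plus local boundedness of $\nabla V$ and $g$ in the power-law case; a genuine strengthening in the singular case, where compact support of the data is not assumed). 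Either state and prove this estimate under hypotheses you make explicit, or add the corresponding mild assumptions; the paper's argument is designed precisely to avoid any gradient bound on the pressure, which is why it claims to need neither strong convergence nor extra compactness.
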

The above graph relation between the limit pressure and density is well-know in the literature. In particular, when considering tumor growth models it implies that saturation holds in the region where there is a positive pressure, which is usually referred to as the region occupied by the tumor. Here we provide a new proof that does not require strong convergence of the density nor the pressure.

In fact, the limit $n_\infty$ satisfies (together with a limit pressure, $p_\infty$) a free boundary type problem, discussed shortly below, and the question of passing to this limiting problem has been vastly addressed in literature. Our contribution is to provide a new proof together with a convergence rate.

\bigskip
\noindent{\bf Motivation and previous works.} 
Models like Eq.~\eqref{eq: equation n} are well-known and commonly employed in a variety of applications, for instance in bio-mathematical modelling of living tissue.
In the case $V=0$, $g=0$, it is well-known that if the pressure satisfies the power law~\eqref{eq: power law}, then Eq.~\eqref{eq: equation n} is actually the porous medium equation
\begin{equation}
    \label{eq: PME}
    \partialt{n\g} - \Delta n\g^\gamma = 0,
\end{equation}
whose well-understood properties (e.g.\ regularising effects) facilitate the analysis notably.
The other choice of the pressure, given by Eq.~\eqref{eq: singular law}, is well-known in kinetic theory of dense gases where the short-distance interactions between particles are strongly repulsive. In this spirit it has been used in models describing collective motion or congested traffic flow, see~\cite{HechtVauchelet2017, PerrinZatorska2015, DegondHua2013, Degond_ARMA2008, Degond_M3AS2008}.
Despite having a singularity when the population density reaches its maximum value (here standardised to $1$), this choice of pressure gives rise to a tissue growth model with similar properties -- indeed, the crucial a-priori estimates are the same and the limiting free boundary type problem is almost identical. A difference is that the singularity in the pressure prevents the cell densities to ever rise above the maximum value $1$.
Taking advantage of these similarities, we shall henceforth index the solution of Eq.~\eqref{eq: equation n} by $\gamma$, $n=n\g$, and consider the singular limit $\gamma\to\infty$. Each of the assumptions and properties we discuss below has its natural $\epsilon$-analogue by putting $\epsilon = 1/\gamma$.

Let us recall that the study of the incompressible limit has a long history and it has been investigated for many different models related to Eq.~\eqref{eq: equation n}. The first result on the limit $\gamma\rightarrow\infty$ has been obtained for the classical porous medium equation~\eqref{eq: PME}.
The most interesting difference from the case with a non-trivial reaction term is that the free boundary problem arising in the limit turns out to be stationary. In fact, as proven in~\cite{CaffarelliFriedman1987} the limit density, $n_\infty$, is independent of time. This result can be intuitively explained by noticing that the degenerate diffusivity of Eq.~\eqref{eq: equation n}, namely $\gamma n^{\gamma-1}$ converges to $0$ if $n<1$, while it tends to infinity in the regions where $n>1$. Therefore, while there is no motion in the regions where the density is below 1, where the solution lies above this level it tends to collapse instantaneously, \cf~\cite{GilQuiros2003}. In the absence of reaction terms and, hence, of any evolution process in the Hele-Shaw problem, the limit pressure turns out to be constantly equal to zero, $p_\infty\equiv 0$.

Introducing non-trivial Dirichlet boundary conditions changes drastically the behaviour of the limit free boundary problem. In fact, the limit pressure no longer vanishes and this triggers the evolution of the interface in accordance with Darcy's law (which states that the velocity of the free boundary is proportional to the pressure gradient). This problem was addressed in~\cite{GilQuiros2001}, where the authors study the incompressible limit of the porous medium equation defined in  $[0,\infty)\times\Omega$, where $\Omega$ is a compact subset of $\R^d$, and the pressure satisfies $p(t,x)=f(t,x)$ on $\partial \Omega$, for some $f(t,x)\geq 0$. In the absence of Dirichlet boundary data, \ie \ $f\equiv 0$, and for $\Omega$ large enough, the problem is actually the same as in~\cite{CaffarelliFriedman1987} and it still holds that $n_\infty=n_\infty(x)$ as well as $p_\infty\equiv 0$. 
On the other hand, if one imposes the pressure to be strictly positive somewhere on $\partial\Omega$, \ie\ $f\not\equiv 0$, then the pressure gradient no longer vanishes and the dynamics of the limit problem is governed by Darcy's law.

The same non-stationary effect, although due to different dynamics, is produced by a non-trivial reaction process.
The incompressible limit for Eq.~\eqref{eq: equation n} without convective effects, \ie \ $V=0$, and with a pressure-dependent growth rate $g=G(p)$, was first addressed in the seminal paper~\cite{PQV} by Perthame, Quir\'os and V\'azquez.
They prove that it is possible to extract subsequences of $n\g$ and $p\g$ which converge in the $L^1$-norm to functions
\begin{equation}
    n_\infty \in C([0,T];L^1(\R^d)) \cap BV((0,T)\times\R^d),\quad p_\infty \in L^2(0,T;H^1(\R^d)) \cap BV((0,T)\times\R^d),
\end{equation}
satisfying the following equation in the sense of distributions on $(0,T)\times\R^d$
\begin{equation}
\label{eq: limit evolution}
    \partialt{n_\infty} - \Delta p_\infty = n_\infty G(p_\infty),
\end{equation}
%%%%%%%%%%%%%%%
\commentout{\discuss{shouldn't we just keep one of the two? since by the relation (7) they are in fact the same?}
\begin{equation}
    \partialt{n_\infty} - \nabla\cdot(n_\infty\nabla p_\infty) = n_\infty G(p_\infty),
\end{equation}}
%%%%%%%%%%%%%%%%%
and the following relations
\begin{equation}
    (1-n_\infty)p_\infty = 0,
\end{equation}
almost everywhere, as well as
\begin{equation}
\label{eq: complementarityPME}
    p_\infty(\Delta p_\infty + G(p_\infty)) = 0,
\end{equation}
in the sense of distributions. 
The last equality is usually referred to as the \emph{complementarity relation} and represents the link between the limit equation and the free boundary problem. In fact, denoting by $\Omega(t):=\{x\in\R^d \ | \ p_\infty(x,t)>0\}$ the region occupied by the tumor, from Eq.~\eqref{eq: complementarityPME} one can see that the pressure satisfies an elliptic equation in the evolving domain $\Omega(t)$ with homogeneous Dirichlet boundary conditions. The free boundary $\partial\Omega(t)$ is moving under Darcy's law, which finally allows to obtain the fully geometrical representation of the limit problem.
A derivation of the velocity law can be found in~\cite{PQV} for initial data given by characteristic functions of bounded sets, although the proof relies on formal arguments. A weak (distributional) and a measure-theoretic interpretation of the free boundary condition have been recovered in~\cite{MelletPerthameQuiros2017}, while in~\cite{Kpo} the same result is achieved through the viscosity solutions approach.

An analogous result regarding the limit $\gamma\to\infty$ has been shown in~\cite{HechtVauchelet2017} for the pressure law given by Eq.~\eqref{eq: singular law}. The authors obtain virtually the same limiting problem, the only difference being that the complementarity relation~\eqref{eq: complementarityPME} becomes
\begin{equation}
\label{eq: complementarity singular}
    p_\infty^2(\Delta p_\infty + G(p_\infty)) = 0,
\end{equation}
see~\cite[Theorem~2.1]{HechtVauchelet2017}.
Let us point out that due to uniform estimates in $L^\infty$ the convergence of the sequence of densities is also true in any $L^p$-space, $p<\infty$.

The Hele-Shaw limit for the porous medium equation including convective effects, \cf Eq.~\eqref{eq: equation n} with $V\not\equiv 0$, and possibly reaction terms, has attracted a lot of interest as well. Similarly as for the driftless case, when passing to the limit $\gamma\rightarrow \infty$, the model converges to a free boundary problem where, however, the interface dynamics is no longer driven only by Darcy's law, but also by the external drift, \ie \ the normal velocity is given by $-(\nabla p_\infty + \nabla V)\cdot\nu$, where $\nu$ is the outward normal direction. The asymptotics as $\gamma\rightarrow\infty$ has been addressed both for local and non-local drift, in the absence of reactions, see for instance~\cite{AlexanderKimYao2014, CraigKimYao2018}, where the authors adopt techniques relying on the gradient flow structure of the equation. In~\cite{KPW2019}, Kim, Po\v{z}\`ar and Woodhouse include also a linear reaction term into the equation and are able to prove the convergence to the incompressible limit using viscosity solutions. Recently, in~\cite{DavidSchmidtchen2021} the authors show that the complementarity condition including a drift, \ie
\begin{equation*}
    p_\infty(\Delta p_\infty +\Delta V + G(p_\infty) ) = 0,
\end{equation*}
holds in the sense of distributions. 

In recent years, many other variations of the model at hand have been proposed together with the analysis of their incompressible limit. We refer the reader to \cite{DavidPerthame2021} for a model including the effects of nutrients, \cite{GKM2020} for the generalization of the driftless model with a non-monotone proliferation term, and \cite{TVCVDP} for the model including active motion. In order to account for visco-elastic effects, several models propose to use Brinkman's law instead of Darcy's law \cite{PV2015}. Moreover, cross-reaction-diffusion model using Darcy's law, Brinkman's law or singular pressure law have attracted a lot of attention as they raise challenging questions both on the existence of solutions and their incompressible limit, see \cite{LiuXu2021, BPPS, GwiazdasPerthame, CFSS, DebiecSchmidtchen2020,DebiecEtAl2021}.

Our aim is to compute the rate of convergence of the solutions of Eq.~\eqref{eq: equation n} as $\epsilon\rightarrow 0$ or $\gamma\rightarrow \infty$ in Eq.~\eqref{eq: singular law} or Eq.~\eqref{eq: power law} respectively. To the best of our knowledge the only result in this direction is given by Alexander, Kim and Yao in~\cite{AlexanderKimYao2014} for the porous medium equation including a space-dependent drift.
Passing to the incompressible limit, the authors are able to build a link between the Hele-Shaw model and the following congested crowd motion model
\[
    \partial_t{n} + \nabla\cdot(n \nabla V) = 0, \quad \text{if} \quad n<1,
\] 
with the constraint $n\leq 1$. To prove the equivalence of the two models, they study the convergence as $\gamma \rightarrow \infty$ of the porous medium equation with drift, \cf~Eq.~\eqref{eq: equation n} with $G\equiv 0$. Unlike~\cite{PQV}, their approach is based on viscosity solutions. On the one hand, they are able to prove locally uniform convergence of the viscosity solution of Eq.~\eqref{eq: equation n} to a solution of the Hele-Shaw model. On the other hand, they show the convergence of the porous medium equation with drift to the aforementioned crowd motion model in the 2-Wasserstein distance. Therefore, they prove the equivalence of the two models in the special case of initial data given by ``patches'', namely $n^0=\mathds{1}_{\Omega_0}$ for a compact set $\Omega_0$. In fact, the locally uniform limit holds only for solutions of the form of a characteristic function, while the limit in the 2-Wasserstein metric holds for any bounded initial data, $0\leq n^0\leq 1$ with finite energy and second moment. 
Moreover, while the local uniform convergence only requires a strict subharmonicity assumption on the drift term, \ie \ $V\in C^2(\R^d),$ $\Delta V>0$, stronger regularity is needed to pass to the 2-Wasserstein limit. More precisely the authors make the following assumptions on $V=V(x)$: there exists $\lambda\in\R$ such that
\begin{equation*}
      \inf_{x\in\R^d} V(x) = 0, \qquad
      D^2 V(x) \geq \lambda I_d, \quad \forall x \in \R^d, \qquad
       \|\Delta V\|_{L^{\infty}(\R^d)}\leq C.
\end{equation*}
Under these assumptions, they derive the following rate of convergence, \cf~\cite[Theorem 4.2.]{AlexanderKimYao2014}
 \begin{equation*}
     \sup_{t\in[0,T]} W_2(n\g(t),n_\infty(t))\leq \frac{C}{\gamma^{1/24}},
 \end{equation*}
where $C$ is a positive constant depending on $\int V n^0 $, $\|\Delta V\|_\infty$ and $T$.

The main result of this paper offers an improved polynomial rate of convergence in a negative Sobolev norm and the strong topology of Lebesgue spaces, see Theorems~\ref{thm: main_negative norm} and~\ref{thm: main_strong norm} above and Corollary~\ref{cor: interpolated rates} below.
Let us remark that the 2-Wasserstein distance and the $\dot{H}^{-1}$-norm can be bounded by each other when the densities are uniformly bounded away from vacuum, see Appendix~\ref{sec: W_2 vs H-1}.
We refer the reader to~\cite[Section~5.5.2]{SantambrogioOTAM}, and references therein, for further discussion about the equivalence of the two distances.
%\TD{In Appendix~\ref{sec: W_2 vs H-1} we present a short argument on controlling the 2-Wasserstein distance by the $\dot{H}^{-1}$-norm (away from vacuum), see also~\cite{Peyre}}
 
 %%%%%%%%%%%%%%
 %%%%%%%%%%%%%%
 %%%%%%%%%%%%%%
\bigskip
\noindent{\bf Preliminaries and assumptions.}
\commentout{Our approach is to first obtain a rate of convergence in the homogeneous negative Sobolev norm $\dot{H}^{-1}$ and then interpolate with the uniform bound in $BV$ to deduce a convergence rate in Lebesgue spaces. To realise this program we make use of the diffusion structure of the problem and ``lift'' the Laplacian.
More precisely, we define the tempered distribution $\varphi$ as the unique solution to the following Poisson equation in $(0,T)\times\R^d$
\begin{equation}
\label{eq: phi}
    -\Delta\varphi = n.
\end{equation}
In all the particular cases considered in this paper we have uniform bounds 
\begin{equation}
    %n\in L^\infty\prt{0,T;(L^1\cap L^\infty)(\R^d)}
    n\in (L^1\cap L^\infty)(\R^d)
\end{equation} and we recall that then we have:
\begin{equation}
    \varphi \in L^p(\R^d),\;\;\text{for }\; p>\frac{d}{d-2},
\end{equation}
and
\begin{equation}
    \nabla\varphi \in L^2(\R^d),\;\;\text{if }\; d\ge3.
\end{equation} 
Let us point out that we require $d\geq3$ to guarantee square-integrability of $\nabla\varphi$ in $\R^d$. This restriction can be lifted when one poses Eq.~\eqref{eq: equation n} in a bounded domain $\Omega$ with homogeneous Dirichlet boundary condition. In particular, this can be done for compactly supported initial data, see Remark~\ref{rmk: finite propagation}.

Notice that given two solutions $n\g$ and $n\gp$ with the same initial data (but different values of $\gamma$ or $\epsilon$) we must have $\varphi\g(0,x) = \varphi\gp(0,x)$.
Finally, we recall that the function $\nabla\varphi$ can be used to represent the $\dot{H}^{-1}$-norm of the function $n$ as follows
\begin{equation*}
    \norm{n(t)}_{\dot{H}^{-1}(\R^d)} = \norm{\nabla\varphi(t)}_{L^2(\R^d)}.
\end{equation*}}
%%%%
%
%\medskip
Throughout this paper we make the following assumptions on the components of the model.
Firstly, we assume that Eq.~\eqref{eq: equation n} is equipped with non-negative initial data $n\g^0$ (resp. $n_\epsilon^0$) such that there is a compact set $K\subset\R^d$ and a function $n_\infty^0\in L^1(\R^d)$ satisfying 
\begin{equation}\label{eq: assumptions data L1}\tag{A--$L^1$data}
\begin{aligned}   
 \supp{n\g^0}\subset K,\;\; &p_\gamma^0=P_\gamma(n_\gamma^0)\in L^\infty(\R^d), &   &0\leq n\g^0\in L^1(\R^d), & &\norm{n\g^0 - n_\infty^0}_{L^1(\R^d)}\rightarrow 0, \\[0.3em]
&p_\epsilon^0=P_\epsilon(n_\epsilon^0)\in L^\infty(\R^d),  &&0\leq n_\epsilon^0\in L^1(\R^d), & &\norm{n_\epsilon^0 - n_\infty^0}_{L^1(\R^d)}\rightarrow 0.
\end{aligned}
\end{equation}

% \begin{equation}
% \label{eq: assumptions data L1}\tag{A--$L^1$data}
% \begin{alignedat}{4}
%     &\supp{n\g^0}\subset K,\;\; && p_\gamma^0=P_\gamma(n_\gamma^0)\in L^\infty(\R^d),\;\; && 0\leq n\g^0\in L^1(\R^d),\;\; && \norm{n\g^0 - n_\infty^0}_{L^1(\R^d)}\rightarrow 0, \\[0.3em]
%     &{\text{and, for the $\epsilon$-law,}} && && && \\[0.3em]
%     %\shortintertext{and, for the $\epsilon$-law}
%     & &&p_\epsilon^0=P_\epsilon(n_\epsilon^0)\in L^\infty(\R^d),\;\;  && 0\leq n_\epsilon^0\in L^1(\R^d),\;\; && \norm{n_\epsilon^0 - n_\infty^0}_{L^1(\R^d)}\rightarrow 0.
% \end{alignedat}
% \end{equation}

\noindent Note in particular that the compact support assumption is needed only in the power law pressure. This is because when the pressure is given by Eq.~\eqref{eq: singular law} we can achieve our main estimate without a uniform bound for the pressure in $L^\infty$, which is not the case for the power law. Having uniformly compactly supported data allows to derive a maximum principle for the equation satisfied by the pressure.
% \ND{not sure if this is the most readable way to put it. We should say some where that we need compact support assumption only for power law.}
When additionally specified, we assume further
\begin{equation}
    \label{eq: assumptions data BV}
    n\g^0 \in BV(\R^d),\qquad  \Delta \prt*{n\g^0}^\gamma\in L^1(\R^d), \tag{A--BV\,data}
\end{equation}
uniformly in $\gamma$.
Secondly, the chemical concentration potential, $V$, is assumed to satisfy
\begin{equation}
\label{eq: assumption V potential}
   D^2V \geq \prt*{\lambda + \frac12 \tr(D^2 V)} I_d,\;\; \text{ for some $\lambda\in\R$}, \tag{A--drift}
\end{equation}
and additionally
\begin{equation}
\label{eq: assumption V BV}
          D^2 V \in   L^\infty((0,T)\times\R^d), \quad
         \grad V \in L^\infty((0,T)\times\R^d), \quad \nabla\Delta V \in L^1((0,T)\times\R^d).     \tag{A--BV drift}
\end{equation} 
%Thirdly, we assume the proliferation rate, $g=g(t,x)$, to be locally integrable, \discuss{$BV((0,T)\times \R^d)$} satisfies one of the following assumptions
%As mentioned above we only consider the special case $G(p) = g$, where where $g=g(t,x)$ is a given proliferation rate, which we assume to be locally integrable, \discuss{$BV((0,T)\times \R^d)$} and satisfy one of the following assumptions
Thirdly, we assume the proliferation rate, $g=g(t,x)$, to be locally integrable and satisfy one of the following assumptions
\begin{equation}
\label{eq: assumption a lap}
    g_+ \in L^\infty((0,T)\times\R^d)\,\,\text{ and }\,\, \Delta g \geq 0, \tag{A--reaction}
\end{equation}
where $f_+:=\max(f,0)$ denotes the positive part of the function,
or%, when $d\geq3$,
\begin{equation}
\label{eq: assumption growth 2}
    |\Delta g|_- \in L^\infty(0,T; L^{d/2}(\R^d)),\;\; d\geq 3, \tag{A--reaction'}
\end{equation}
where $f_-:=\max(-f,0)$ denotes the negative part of the function, or in alternative
\begin{equation}
\label{eq: assumption a grad}
    g_+ \in L^\infty((0,T)\times\R^d)\,\,\text{ and }\,\, \nabla g \in L^\infty(0,T;L^d(\R^d)),\;\; d\geq 3. \tag{A--reaction''}
\end{equation}

\noindent
Under these assumptions one can derive several crucial uniform estimates for Eq.~\eqref{eq: equation n}.
\begin{lemma}[A-priori estimates]
\label{lemma: a-priori}
Under assumption~\eqref{eq: assumptions data L1} the family $n\g$ of solutions to Eq.~\eqref{eq: equation n} satisfies the following bounds, uniformly in $\gamma$
    \begin{enumerate}
        \item $\supp{p\g(t)}\subset K(t)$ for some compact set $K(t)$,
        \item 
        there exists a positive constant $p_M=p_M(T)$ such that $0 \le p\g \leq p_M, \; 0\leq n\g \leq ~\prt*{\frac{\gamma-1}{\gamma}p_M}^{\frac{1}{\gamma-1}},$
        \item $n\g \in L^\infty(0,T; L^1(\R^d))$.
    \end{enumerate}
    Assuming in addition \eqref{eq: assumption V BV} we also have $n\g \in L^\infty(0,T; BV(\R^d))$.
    Moreover, when the pressure is given by Eq.~\eqref{eq: singular law}, we have $0\leq n\e\leq1$.
\end{lemma}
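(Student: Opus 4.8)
The plan is to run every bound off the equation satisfied by the pressure. Using $n\,P_\gamma'(n)=(\gamma-1)P_\gamma(n)$ for the power law, one multiplies Eq.~\eqref{eq: equation n} by $P_\gamma'(n)$ and rearranges to get
\begin{equation}
 \partialt{p\g}=(\gamma-1)\,p\g\bigl(\Lap p\g+\Lap V+g\bigr)+\grad p\g\cdot\bigl(\grad p\g+\grad V\bigr),
\end{equation}
and, since $n\,P_\epsilon'(n)=P_\epsilon(n)+P_\epsilon(n)^2/\epsilon$, the same identity for~\eqref{eq: singular law} with $(\gamma-1)p\e$ replaced by $p\e+p\e^2/\epsilon$. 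Items~1 and~2 I would establish together, by one comparison argument. The running assumptions on $V$ and $g$ --- in particular~\eqref{eq: assumption V potential} --- make $\Lap V+g$ bounded above by a constant $C_0$ depending only on the data, so for any $A>C_0/(2d)$ the concave quadratic barrier
\begin{equation}
 \bar p(t,x):=A\bigl(R(t)^2-\abs{x}^2\bigr)_+,\qquad R(t):=R_0+Lt,
\end{equation}
is, for $R_0$ and $L$ chosen large enough and for all large $\gamma$, a supersolution of the pressure equation which dominates $p\g^0$. Comparison --- carried out on the density equation, which genuinely enjoys a comparison principle --- then yields $p\g\le\bar p$, hence simultaneously $\supp p\g(t)\subset B_{R(T)}=:K(T)$ and $0\le p\g\le A\,R(T)^2=:p_M$, uniformly in $\gamma$; applying $P_\gamma^{-1}$ converts this into $0\le n\g\le\bigl(\tfrac{\gamma-1}{\gamma}p_M\bigr)^{1/(\gamma-1)}$.

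The supersolution check is the delicate point, and the step I expect to be the main obstacle --- it is precisely here that the compact support of the data is used in an essential way. A plain maximum principle for the pressure equation is of no help: at a spatial maximum it gives only $\ddt\max_x p\g\le(\gamma-1)C_0\max_x p\g$, i.e.\ a bound $\propto e^{(\gamma-1)C_0 t}$ that degenerates as $\gamma\to\infty$. The fix is that on the support ball $\Lap\bar p=-2Ad$, so that $\Lap\bar p+\Lap V+g\le-(2Ad-C_0)<0$; since $\bar p\ge0$, the strongly dissipative term $(\gamma-1)\bar p\,(\Lap\bar p+\Lap V+g)$ is then bounded above by $-(\gamma-1)c$ for some $c>0$ on $\{p\g>0\}$, and for $\gamma$ large this overwhelms the remaining contribution $\grad\bar p\cdot(\grad\bar p+\grad V)=4A^2\abs{x}^2-2A\,x\cdot\grad V$ --- but only because $\abs{x}$ is bounded on the support, so that $4A^2\abs{x}^2$ is a fixed constant rather than an unbounded quantity. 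For the singular law this mechanism is not even needed at the level of the pressure: $n\e<1$ is built into the very definition of $P_\epsilon$, the solution taking values in $[0,1)$, so $0\le n\e\le1$ holds with no $L^\infty$ bound on $p\e$ --- which is why, as the authors observe, the compact support hypothesis can be dropped there.

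The remaining statements are routine once 1--2 are available. For item~3 I would integrate Eq.~\eqref{eq: equation n} over $\R^d$: by item~1 the divergence terms have no boundary contribution, so $\ddt\int_{\R^d}n\g=\int_{\R^d}n\g\,g$, and combining the $L^\infty$ bound on $n\g$ with the integrability of $g$ on the (uniformly bounded) support --- or simply $\ddt\int n\g\le\norm{g_+}_\infty\int n\g$ when $g_+\in L^\infty$ --- Grönwall gives the uniform bound in $L^\infty(0,T;L^1(\R^d))$. Finally, assuming~\eqref{eq: assumption V BV} (and $BV$ initial data, see~\eqref{eq: assumptions data BV}), I would propagate $BV$ regularity in the usual way: differentiating the equation in $x_k$ and testing against a smooth approximation of $\sign(\partial_k n\g)$, summing over $k$ (equivalently, by doubling of variables), the second-order diffusion contributes a non-positive term while the drift and reaction terms are absorbed using $\norm{D^2V}_\infty$, $\norm{\grad V}_\infty$, $\norm{\grad\Lap V}_{L^1}$ and the already-established $L^1\cap L^\infty$ bounds on $n\g$; this yields $\ddt\norm{\grad n\g(t)}_{L^1}\le C\bigl(\norm{\grad n\g(t)}_{L^1}+1\bigr)$, and Grönwall gives $n\g\in L^\infty(0,T;BV(\R^d))$ uniformly in $\gamma$.
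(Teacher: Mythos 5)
The paper never proves this lemma --- it explicitly defers to~\cite{PQV, HechtVauchelet2017, GKM2020, DavidSchmidtchen2021} --- so the comparison is with the standard arguments of those references, and your reconstruction follows them faithfully: the pressure equation $\partial_t p\g=(\gamma-1)p\g(\Lap p\g+\Lap V+g)+\grad p\g\cdot(\grad p\g+\grad V)$ (and its analogue with $(\gamma-1)p\e$ replaced by $p\e+p\e^2/\epsilon$) is correct, the barrier-plus-comparison-at-the-density-level route to items 1--2 is the intended one, and your diagnosis that the naive maximum principle only gives $e^{(\gamma-1)C_0t}$, so that one must exploit the sign of $(\gamma-1)\bar p\,(\Lap\bar p+C_0)$ for a uniformly concave, compactly supported barrier, is exactly the right mechanism. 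Items 3 and the $BV$ propagation are the standard mass-balance and Kato-type estimates.

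One step does not close as written. Near the free boundary of $\bar p$ the dissipative term vanishes (because $\bar p$ does), so there the supersolution inequality reduces to the Darcy condition $\partial_t\bar p\ge\abs{\grad\bar p}^2+\grad\bar p\cdot\grad V$, i.e.\ $2ALR(t)\ge 4A^2R(t)^2+2AR(t)\norm{\grad V}_{L^\infty}$, i.e.\ $L\ge 2AR(t)+\norm{\grad V}_{L^\infty}$. With $R(t)=R_0+Lt$ this is self-referential: it requires $L(1-2AT)\ge 2AR_0+\norm{\grad V}_{L^\infty}$, which no choice of $L$ satisfies once $T\ge 1/(2A)$. The fix is routine --- iterate the construction on time intervals of length below $1/(2A)$, or take $R(t)=R_0e^{\mu t}$ with $\mu=2A+\norm{\grad V}_{L^\infty}/R_0$, for which $R'=\mu R\ge 2AR+\norm{\grad V}_{L^\infty}$ on all of $[0,T]$ --- but ``choose $L$ large'' alone does not produce a global-in-time barrier. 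Two smaller caveats: the upper bound $\Lap V+g\le C_0$ is an additional standing hypothesis rather than a consequence of~\eqref{eq: assumption V potential} when $d=2$, or of~\eqref{eq: assumption growth 2} in any dimension; and in the $BV$ step the commutator with the reaction produces $n\g\grad g$, so you also need $\grad g\in L^1$, i.e.\ the $g\in BV$ hypothesis of Theorem~\ref{thm: main_strong norm}, alongside~\eqref{eq: assumptions data BV} and~\eqref{eq: assumption V BV}.
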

\noindent These bounds are enough for our purposes. Their proofs are fairly standard and derived in full detail in~\cite{PQV, HechtVauchelet2017, GKM2020, DavidSchmidtchen2021}, so we omit them here. Let us point out that to fully justify passing to the incompressible limit $\gamma\to\infty$ one usually needs to derive additional estimates for the time derivative of the population density and the pressure.
%Again, we omit the details here, assuming we are already in the position to perform the limit.  

% \TD{
% \begin{remark}[Regularity of the drift potential]
%     \label{rmk: drift}
%     We remark here that assumption~\eqref{eq: assumption V potential} alone is not enough to pass to the incompressible limit in Eq.~\eqref{eq: equation n} with non-zero drift. In particular, when the potential $V$ is not time-independent, then additional regularity is required of its time derivative.
%     %\discuss{Perhaps we should recall them explicitly here? Or even convert the whole remark into the assumption (A-drift)?}
% \end{remark}
% }

\begin{remark}[More general drift term]
    It is easily seen in the proof of our main results that we do not require the drift velocity to be a gradient. Indeed, one can replace the term $n\nabla V$ in Eq.~\eqref{eq: equation n} by $n U(t,x)$ with appropriate modifications to the regularity assumptions~\eqref{eq: assumption V potential} and~\eqref{eq: assumption V BV}.
\end{remark}

%\bigskip
% \noindent{\bf Main result.}
% Below we formulate the main result of this paper. 
% We assume throughout that Eq.~\eqref{eq: equation n} is equipped with a non-negative initial datum $n^0$ such that
% \begin{equation}
%     \label{eq: assumptions data}
%     n^0 \in BV(\R^d),\quad p^0=p(n^0) \leq p_M,
% \end{equation}
% for some positive constant $p_M$ called the \emph{homeostatic pressure}.
%of uniformly $BV$-bounded functions, which is compact in $L^1$, \ie, there is a function $n^0\in L^1(\R^d)$ such that
%\begin{equation}
%    \norm*{n^0\g-n^0}_{L^1(\R^d)} \longrightarrow 0,
%\end{equation}
%as well as a sequence of initial pressures $p\g^0 = p\g(n^0)$ 
% satisfying $p^0:=p(n^0) \leq p_M$ for some positive constant $p_M$. We should in fact take $n^0\g = n^0$, because we want all our $n\g$'s to have the same data...}

\bigskip
\noindent
Our approach is to first obtain a rate of convergence in the homogeneous negative Sobolev norm $\dot{H}^{-1}$ and then interpolate with the uniform bound in $BV$ to deduce a convergence rate in Lebesgue spaces. To realise this program we make use of the diffusion structure of the problem and ``lift'' the Laplacian.
More precisely, we define the function $\varphi$ to be the solution of the following Poisson equation in $(0,T)\times\R^d$
\begin{equation}
\label{eq: phi}
   -\Delta\varphi\g = n\g,
\end{equation}
given by the convolution $\varphi\g = \mathcal{K}\star n\g$, where $\mathcal{K}$ is the fundamental solution of the Laplace equation. Explicitly, for $x\neq 0$,
\begin{equation}
    \mathcal{K}(x) = 
        \begin{dcases}
     -\frac{1}{2\pi}\ln|x|, \quad &\text{for} \quad d=2,\\[0.3em]
     \frac{1}{d(d-2)\omega_d}|x|^{2-d},  \quad &\text{for} \quad d\geq 3,
        \end{dcases}
\end{equation}
where $\omega_d$ denotes the volume of the unit ball in $\R^d$.

% More precisely, we define the tempered distribution $\varphi$ as the solution to the following Poisson equation in $(0,T)\times\R^d$ defined by
% \begin{equation}
% \label{eq: phi}
%   \varphi= \mathcal{K}\star n, \qquad  -\Delta\varphi = n,
% \end{equation}
% where $\mathcal{K}$ is the fundamental solution of the Laplacian.
%In all the particular cases considered in this paper we have uniform bounds 
%\begin{equation}
    %n\in L^\infty\prt{0,T;(L^1\cap L^\infty)(\R^d)}
%    n\in (L^1\cap L^\infty)(\R^d).
%\end{equation}
Suppose for now that $d\geq 3$. Then a straightforward application of Young's inequality shows that
\begin{equation}
    \varphi\g \in L^p(\R^d),\;\;\text{for }\; p>\frac{d}{d-2},
\end{equation}
and
\begin{equation}
    \nabla\varphi\g \in L^2(\R^d).%,\;\;\text{for }\; d\ge3.
\end{equation}
If $d=2$, then we do not have $\varphi\g\in L^\infty(\R^2)$ and we cannot apply Young's inequality to deduce square-integrability of $\nabla\varphi$ (indeed, this is an endpoint case). We can however apply the logarithmic Hardy-Littlewood-Sobolev inequality, \cf\ Lemma~\ref{lemma: log-HLS}, to deduce that
\begin{equation}
    \int_{\R^2}|\nabla\varphi\g|^2 = \int_{\R^2}n\g\varphi\g \leq C,
\end{equation}
provided that $n\g\ln{n\g}$ is integrable (uniformly in $\gamma$). This can be guaranteed under additional assumptions on the initial data
\begin{equation}
\label{eq: data in d=2}
    \int_{\R^2}n\g^0\ln{n\g^0} < \infty, \quad \int_{\R^2}|x|^2n\g^0 <\infty. \tag{A--2D\,data}
\end{equation}
These bounds are propagated and imply integrability of $n\g\ln{n\g}$. We refer the reader to Appendix~\ref{sec: a-priori for d=2} for proofs of these facts.

%Let us point out that we require $d\geq3$ to guarantee square-integrability of $\nabla\varphi$ in $\R^d$. This restriction can be lifted when one poses Eq.~\eqref{eq: equation n} in a bounded domain $\Omega$ with homogeneous Dirichlet boundary condition. In particular, this can be done for compactly supported initial data, see Remark~\ref{rmk: finite propagation}.

\medskip
% Notice that given two solutions $n\g$ and $n\gp$ with the same initial data (but different values of $\gamma$ or $\epsilon$) we must have $\varphi\g(0,x) = \varphi\gp(0,x)$.

Notice that the $L^1$ convergence of the initial data implies the convergence of $\nabla\varphi\g^0$ to $\nabla\varphi_\infty^0$ in $L^2$. Moreover, the uniform bounds on $n\g$ together with the Hardy-Littlewood-Sobolev inequality imply that the convolution $n\g\mapsto \mathcal{K}\star n\g$ is a bounded linear operator from $L^{2d/d+2}$ to $L^2$. Therefore there is a subsequence $\nabla\varphi_{\gamma_k}$ which converges weakly in $L^2$ to $\nabla\varphi_\infty$.

Finally, we recall that the gradient $\nabla\varphi$ can be used to represent the $\dot{H}^{-1}$-norm of the function $n$ as follows
\begin{equation}
    \norm{n(t)}_{\dot{H}^{-1}(\R^d)} = \norm{\nabla\varphi(t)}_{L^2(\R^d)}.
\end{equation}
% We obtain the following rate of convergence in the negative norm \discuss{should it be a Lemma?}
% \begin{equation}
%   \sup_{t\in[0,T]} \|n\g(t) - n_\infty (t)\|_{\dot{H}^{-1}(\R^d)} \leq  \frac{C(T)}{{\gamma}^{1/2}}.
% \end{equation}
% \discuss{end} 

Having obtained a convergence rate in the negative norm and assuming additionally the $BV$ bounds provided by Lemma~\ref{lemma: a-priori}, we will use the following interpolation inequality, proved (in greater generality) by Cohen et al.~\cite{Cohen_2003} (see also~\cite{CintiOtto2016}), to deduce a rate in the Lebesgue $4/3$-norm:
\begin{lemma}[Interpolation inequality]
\label{lemma: interpolation}
    There exists a constant $C=C(d,T)>0$, such that, for all $t\in[0,T]$,
    \begin{equation}
    \label{eq: interpolation}
        \norm{n(t)}_{L^{4/3}(\R^d)} \leq C |n(t)|_{BV(\R^d)}^{1/2} \norm{\nabla\varphi(t)}_{L^2(\R^d)}^{1/2}.
    \end{equation} 
\end{lemma}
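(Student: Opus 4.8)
The inequality to establish is the interpolation estimate
\[
    \norm{n(t)}_{L^{4/3}(\R^d)} \leq C\, |n(t)|_{BV(\R^d)}^{1/2}\, \norm{\nabla\varphi(t)}_{L^2(\R^d)}^{1/2},
\]
where $-\Delta\varphi = n$. The plan is to freeze $t$ and prove a static inequality of the form $\norm{n}_{L^{4/3}} \le C \,|n|_{BV}^{1/2}\, \norm{\nabla\varphi}_{L^2}^{1/2}$ for a generic function $n$ with $n\in BV(\R^d)$ and $\nabla\varphi\in L^2(\R^d)$. The natural tool is a Littlewood--Paley / dyadic decomposition of $n$: write $n = \sum_{j\in\mathbb{Z}} \Delta_j n$ where $\Delta_j$ is a frequency projection onto $|\xi|\sim 2^j$. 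The point is that the $BV$-seminorm controls the high-frequency part (roughly $\norm{\Delta_j n}_{L^1}\lesssim 2^{-j}|n|_{BV}$ after combining with an $L^1$ bound on the multiplier, using that $\widehat{\nabla n}=i\xi\widehat n$ is a bounded measure), while $\norm{\nabla\varphi}_{L^2}^2 = \int |\xi|^{-2}|\widehat n|^2\,d\xi$ controls the low-frequency part (morally $\norm{\Delta_j n}_{L^2}\lesssim 2^{j}\norm{\nabla\varphi}_{L^2}$). One then interpolates: from $\norm{\Delta_j n}_{L^1}$ and $\norm{\Delta_j n}_{L^2}$ one obtains $\norm{\Delta_j n}_{L^{4/3}}$ by the Bernstein/Nikolskii inequality $\norm{\Delta_j n}_{L^{4/3}} \lesssim 2^{jd/4}\norm{\Delta_j n}_{L^1}$ on the high side and $\norm{\Delta_j n}_{L^{4/3}}\lesssim \norm{\Delta_j n}_{L^2}$ (with a gain on the low side, or via $\norm{\Delta_j n}_{L^{4/3}}\lesssim \norm{\Delta_j n}_{L^1}^{1/2}\norm{\Delta_j n}_{L^2}^{1/2}$ by plain Hölder), and sums a geometric series in $j$, choosing the crossover frequency $2^{j_0}$ to balance $|n|_{BV}$ against $\norm{\nabla\varphi}_{L^2}$. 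The exponent $4/3$ and the power $1/2$ on each factor are exactly what makes the two geometric tails converge and the crossover optimization produce $|n|_{BV}^{1/2}\norm{\nabla\varphi}_{L^2}^{1/2}$; this is the Cohen--DeVore--Petrushev--Xu interpolation result cited as~\cite{Cohen_2003} (see also the real-space argument of Cinti--Otto~\cite{CintiOtto2016}), so I would simply invoke it and, if desired, sketch the dyadic computation above.

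Concretely, the order of steps is: (i) reduce to the static inequality at fixed $t$ and recall $\norm{\nabla\varphi}_{L^2} = \norm{n}_{\dot H^{-1}}$; (ii) split $n = n_{\mathrm{low}} + n_{\mathrm{high}}$ at a scale $R$ using a smooth Fourier cutoff; (iii) bound $\norm{n_{\mathrm{low}}}_{L^{4/3}}$ by real interpolation between $L^1$ (from the $L^1$ part of $BV$, noting $\norm{n}_{L^1}$ is controlled by the a-priori estimates but in fact one only needs $\dot H^{-1}$ and $BV$) — actually more cleanly, estimate $\norm{n_{\mathrm{low}}}_{L^{4/3}}$ using $\dot H^{-1}$ plus Bernstein to gain integrability at low frequencies, and $\norm{n_{\mathrm{high}}}_{L^{4/3}}$ using $\dot W^{1,1}\subset BV$ plus Bernstein/Sobolev embedding at high frequencies; (iv) optimize over $R$. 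The constant $C$ depends only on $d$, and the $T$-dependence in the statement is spurious at this level (it enters only when one later feeds in the $t$-dependent a-priori bounds), so I would keep $C=C(d)$ and absorb the $T$ into the application.

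The main obstacle is making the two one-sided frequency estimates genuinely rigorous: the bound $\norm{\Delta_j n}_{L^1}\lesssim 2^{-j}|n|_{BV}$ requires care because $\nabla n$ is only a finite Radon measure, not an $L^1$ function, so one must argue via the convolution kernel $2^{jd}\check\psi(2^j\cdot)$ having $L^1$-norm $\lesssim 2^{-j}$ after integrating by parts against the measure; and the low-frequency bound requires translating $\int|\xi|^{-2}|\widehat n|^2$ into an $L^{4/3}$ statement, which is exactly where Bernstein's inequality on a ball and Hölder interaction between $L^2$ and $L^1$ information must be combined just right to land on the exponent $4/3$ rather than some nearby exponent. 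Since all of this is precisely the content of the cited interpolation theorem, the cleanest exposition is to state that Lemma~\ref{lemma: interpolation} is the special case $s_0 = -1$, $s_1 = 1$ (or the relevant parameter choice) of~\cite[Theorem ...]{Cohen_2003}, and optionally append the two-paragraph dyadic sketch above for the reader's convenience.
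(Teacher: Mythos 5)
Your bottom line --- invoke \cite{Cohen_2003} (with \cite{CintiOtto2016} as an alternative) --- is exactly what the paper does: in the compiled text Lemma~\ref{lemma: interpolation} is stated with a citation and no proof, so on that level you agree with the authors. Your observation that $C$ depends only on $d$ and that the $T$-dependence is spurious at this stage is also correct.

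However, the dyadic sketch you offer as a ``proof for the reader's convenience'' has a genuine gap, and it is worth being precise about it because it is exactly the point that makes this inequality a theorem rather than a routine interpolation. The two one-sided bounds you propose, $\norm{\Delta_j n}_{L^1}\lesssim 2^{-j}|n|_{BV}$ and $\norm{\Delta_j n}_{L^2}\lesssim 2^{j}\norm{n}_{\dot H^{-1}}$, combine via H\"older into $\norm{\Delta_j n}_{L^{4/3}}\leq \norm{\Delta_j n}_{L^1}^{1/2}\norm{\Delta_j n}_{L^2}^{1/2}\lesssim |n|_{BV}^{1/2}\norm{n}_{\dot H^{-1}}^{1/2}$ \emph{uniformly in $j$}: the powers of $2^j$ cancel exactly, so the series $\sum_j\norm{\Delta_j n}_{L^{4/3}}$ is not geometric and does not converge --- a crossover at $2^{j_0}$ does not help, since on the low-frequency side there is no bound with a positive power of $2^j$ unless you import $\norm{n}_{L^1}$, which destroys the two-norm homogeneity of the statement. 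In Besov language, $BV\hookrightarrow \dot B^1_{1,\infty}$ and $\dot H^{-1}=\dot B^{-1}_{2,2}$ interpolate formally only into $\dot B^0_{4/3,4}$, which strictly \emph{contains} $L^{4/3}$, so the naive frequency decomposition lands on the wrong side of $L^{4/3}$. Closing this gap is precisely the content of \cite{Cohen_2003}, which exploits the weak-$\ell^1$ behaviour of wavelet coefficients of $BV$ functions; the alternative real-space route of \cite{CintiOtto2016} (a scaling reduction to $\int|n|^{4/3}\leq C(|n|_{BV}+\norm{\nabla\varphi}_{L^2}^2)$, followed by a decomposition over the signed level sets $\chi_\mu=\mathds{1}_{\{n>\mu\}}-\mathds{1}_{\{n<-\mu\}}$ mollified at a $\mu$-dependent scale, with the coarea formula producing the $BV$ term and an integration by parts against $-\Delta\varphi=n$ producing the $\dot H^{-1}$ term) avoids wavelets but is equally non-elementary. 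So: keep the citation, but drop or substantially repair the Littlewood--Paley sketch --- as written it would not compile into a proof.
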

% for all $t\in[0,T]$, there exists a constant $C=C(d,T)>0$, such that
% \begin{equation}
% \label{eq: interpolation}
%   \norm{n(t)}_{L^{4/3}(\R^d)} \leq C |n(t)|_{BV(\R^d)}^{1/2} \norm{\nabla\varphi(t)}_{L^2(\R^d)}^{1/2}.
% \end{equation} 
\noindent %We present a short proof of the above inequality in Appendix~\ref{sec: intrpolation proof}. 
Thus, Theorem~\ref{thm: main_strong norm} is a simple consequence of Theorem~\ref{thm: main_negative norm}, Lemma~\ref{lemma: interpolation} and the uniform bound in $BV$ provided by Lemma~\ref{lemma: a-priori}.

% \begin{thm}
% \label{thm: main theorem}
%     Let $n\g$ be the solution to Eq.~\eqref{eq: equation n} and $n_\infty$ the solution to the limit problem~\eqref{eq: limit evolution}. We have the following estimate on the convergence $n\g\to n_\infty$
%         \begin{equation}
%         \label{eq: convergence rate}
%             \sup_{t\in[0,T]}\,\norm{n\g(t)-n_\infty(t)}_{L^{4/3}(\R^d)} \leq C \frac{1}{\gamma^{1/4}}.
%         \end{equation}
% \end{thm}

By the usual log-convex interpolation of the $L^p$-norms we readily obtain the following corollary to Theorem~\ref{thm: main_strong norm}.
\begin{corollary}[Convergence rate in $L^p$]
\label{cor: interpolated rates}
    \begin{equation}
        \sup_{t\in[0,T]}\,\norm{n\g(t)-n_\infty(t)}_{L^{p}(\R^d)} \leq  \frac{C}{\gamma^{\alpha}},
    \end{equation}
    with
    \begin{equation}
    \alpha :=  
    \begin{dcases}
     \frac{p-1}{p}, \quad &\text{for} \quad p\in(1,4/3],\\[0.3em]
     \frac{1}{3p},  \quad &\text{for} \quad p\in[4/3,\infty).
     \end{dcases}
\end{equation}
\end{corollary}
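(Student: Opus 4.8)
The plan is to derive Corollary~\ref{cor: interpolated rates} purely by real interpolation of the $L^p$-norms between the two endpoint estimates already in hand: the $\dot H^{-1}$-rate of Theorem~\ref{thm: main_negative norm} and the $L^{4/3}$-rate of Theorem~\ref{thm: main_strong norm}, using that $n\g$ and $n_\infty$ are uniformly bounded (Lemma~\ref{lemma: a-priori}, together with the $BV\hookrightarrow L^1$ and $0\le n_\infty\le 1$ properties). First I would record that $w\g:=n\g-n_\infty$ satisfies $\sup_{t}\|w\g(t)\|_{L^\infty(\R^d)}\le C$ uniformly (from the uniform $L^\infty$ bounds on $n\g$ and the fact that $n_\infty$ inherits $0\le n_\infty\le 1$ in the singular case, or the analogous $L^\infty$ bound in the power-law case via $p_M$). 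Then for $p\in(1,4/3]$ the log-convexity of $L^q$-norms gives, for the interpolation parameter $\theta$ determined by $\tfrac1p=\theta\cdot 1+(1-\theta)\cdot 0$ written on the reciprocal-exponent scale between $L^1$ and $L^\infty$—but since we do not a priori control $\|w\g\|_{L^1}$ by a rate, the cleaner route is to interpolate between $L^{4/3}$ (with a rate) and $L^\infty$ (with only a uniform bound): for $p\ge 4/3$ write $\tfrac1p=\tfrac{\theta}{4/3}+\tfrac{1-\theta}{\infty}$, i.e. $\theta=\tfrac{4}{3p}$, so that $\|w\g(t)\|_{L^p}\le \|w\g(t)\|_{L^{4/3}}^{\theta}\|w\g(t)\|_{L^\infty}^{1-\theta}\le C\,(\gamma^{-1/4})^{4/(3p)}=C\gamma^{-1/(3p)}$, which is exactly the stated exponent $\alpha=\tfrac{1}{3p}$ on $[4/3,\infty)$.

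For the range $p\in(1,4/3]$ I would instead interpolate between $\dot H^{-1}$ and $L^{4/3}$ — or, more elementarily, between $L^1$ and $L^{4/3}$ after first observing that the $\dot H^{-1}$-rate together with an $L^\infty$-bound yields an $L^1$-rate. Concretely, on a fixed time slice, the Gagliardo–Nirenberg-type inequality $\|f\|_{L^1}\lesssim \|f\|_{\dot H^{-1}}^{\,a}\,\|f\|_{L^\infty}^{\,1-a}$ (valid with an appropriate $a\in(0,1)$ depending only on $d$, via Fourier localisation / Littlewood–Paley splitting at frequency $R$ and optimising $R$) converts the $\gamma^{-1/2}$ rate in $\dot H^{-1}$ into some algebraic rate in $L^1$; then log-convexity between $L^1$ and $L^{4/3}$, namely $\|w\g\|_{L^p}\le \|w\g\|_{L^1}^{\,1-s}\|w\g\|_{L^{4/3}}^{\,s}$ with $\tfrac1p=(1-s)\cdot 1+s\cdot\tfrac34$, gives a rate for every intermediate $p$. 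Since we only need the clean exponent $\tfrac{p-1}{p}$, the most economical argument is simply: $\|w\g\|_{L^p}\le \|w\g\|_{L^1}^{1/p'}\,\|w\g\|_{L^\infty}^{1/p}$ — wait, that reads $\tfrac1p=\tfrac{1}{p'}\cdot 1$, i.e. interpolating $L^p$ between $L^1$ and $L^\infty$ with $\theta=\tfrac1p$ in the sense $\|w\g\|_{L^p}\le\|w\g\|_{L^1}^{1/p}\|w\g\|_{L^\infty}^{1-1/p}$ — so if one already has an $L^1$-rate of the form $\gamma^{-\beta}$, the $L^p$-rate is $\gamma^{-\beta/p}$, and matching $\beta=p-1$ at $p=4/3$ (where the corollary's two formulas must agree, giving $\alpha=\tfrac14$) forces the bookkeeping to be done via the $L^{4/3}$ endpoint rather than a raw $L^1$ endpoint. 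I would therefore present it as: for $p\in(1,4/3]$, interpolate $L^p$ between $L^1$ and $L^{4/3}$ using $\|w\g(t)\|_{L^1}\le\|w\g(t)\|_{L^{4/3}}^{?}\dots$ — no: $L^1\subset$ between needs $L^{4/3}$ on the high side, so instead use $\|w\g\|_{L^p}\le \|w\g\|_{L^1}^{1-s}\|w\g\|_{L^{4/3}}^{s}$ with $s$ from $\tfrac1p=(1-s)+\tfrac{3s}{4}$, i.e. $s=\tfrac{4(p-1)}{p}$, and bound $\|w\g\|_{L^1}\le C$ (uniform, no rate) and $\|w\g\|_{L^{4/3}}\le C\gamma^{-1/4}$, giving rate $\gamma^{-s/4}=\gamma^{-(p-1)/p}$, exactly $\alpha=\tfrac{p-1}{p}$.

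So the final write-up is short: \emph{Proof.} Set $w\g(t):=n\g(t)-n_\infty(t)$. By Lemma~\ref{lemma: a-priori} and the ensuing uniform $L^\infty$ and $L^1$ bounds on $n\g$ and $n_\infty$ we have $\sup_{t\in[0,T]}\big(\|w\g(t)\|_{L^1(\R^d)}+\|w\g(t)\|_{L^\infty(\R^d)}\big)\le C$ for a constant $C=C(T)$ independent of $\gamma$. For $p\in[4/3,\infty)$, the log-convexity of Lebesgue norms gives $\|w\g(t)\|_{L^p}\le \|w\g(t)\|_{L^{4/3}}^{4/(3p)}\,\|w\g(t)\|_{L^\infty}^{1-4/(3p)}$, and Theorem~\ref{thm: main_strong norm} bounds the first factor by $C\gamma^{-1/4}$ up to the initial-data term, yielding the rate $\gamma^{-1/(3p)}$. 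For $p\in(1,4/3]$, writing $\tfrac1p=(1-s)+\tfrac34 s$ with $s=\tfrac{4(p-1)}{p}\in(0,1]$, log-convexity gives $\|w\g(t)\|_{L^p}\le\|w\g(t)\|_{L^1}^{1-s}\|w\g(t)\|_{L^{4/3}}^{s}\le C\,\gamma^{-s/4}=C\,\gamma^{-(p-1)/p}$. Taking the supremum over $t\in[0,T]$ and absorbing the initial-data contribution from Theorem~\ref{thm: main_strong norm} into the constant completes the proof. $\square$ The main (very mild) obstacle is merely the bookkeeping: checking that the singular-pressure case and the power-law case both supply the uniform $L^\infty$-bound on $w\g$ needed as the trivial endpoint, and verifying that the two branch formulas for $\alpha$ agree at $p=4/3$ (both give $\tfrac14$), so the piecewise definition is consistent; no new estimate is required beyond Theorems~\ref{thm: main_negative norm}–\ref{thm: main_strong norm} and Lemma~\ref{lemma: a-priori}.
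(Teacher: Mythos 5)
Your proposal is correct and is exactly the argument the paper intends: the paper justifies the corollary with the single line ``by the usual log-convex interpolation of the $L^p$-norms,'' which is precisely your interpolation of $L^p$ between $L^1$ and $L^{4/3}$ for $p\in(1,4/3]$ and between $L^{4/3}$ and $L^\infty$ for $p\in[4/3,\infty)$, using the uniform $L^1$ and $L^\infty$ bounds from Lemma~\ref{lemma: a-priori} as the rate-free endpoints. The exponents $s=\tfrac{4(p-1)}{p}$ and $\theta=\tfrac{4}{3p}$ check out and both branches agree at $p=4/3$, so only the exploratory digressions in your middle paragraph should be cut from the final write-up.
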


\begin{remark}[Finite speed of propagation]
\label{rmk: finite propagation}
    When one assumes additionally that the initial data have uniformly compact support, %(\ie, there is a compact set $K$, independent of $\gamma$ such that $\supp{n\g^0}\subset K$), 
    then at any later time the support of $n\g$ is still uniformly contained in a bounded set (this is one of the fundamental properties of the porous medium equation, see~\cite[Lemma~2.6]{PQV} and~\cite[Lemma~3.3]{HechtVauchelet2017} for the model with a non-zero right-hand side). Therefore one can consider problem~\eqref{eq: equation n} to be posed on a bounded subset of $\R^d$ with homogeneous Dirichlet boundary condition. Naturally our results remain true in this case with the improvement that we obtain a rate $\sim\gamma^{-1/4}$ in any $L^p$-norm, $1\leq p\leq 4/3$. In particular this covers the case of ``patches'', \ie, when the initial distribution is given by an indicator function of a compact set, as considered recently in~\cite{AlexanderKimYao2014}. 
\end{remark}

% \begin{remark}[Regularity of the drift potential]
%     \label{rmk: drift}
%     We remark here that assumption~\eqref{eq: assumption V potential} alone is not enough to pass to the incompressible limit in Eq.~\eqref{eq: equation n} with non-zero drift. In particular, when the potential $V$ is not time-independent, then additional regularity is required of its time derivative. 
% \end{remark}

% \begin{remark}[General drift term]
%     It is easily seen in the proof of our main results that we do not require the drift velocity to be a gradient. Indeed, one can replace the term $n\nabla V$ in Eq.~\eqref{eq: equation n} by $n U(t,x)$ with appropriate modifications to the regularity assumption~\eqref{eq: assumption V potential}.
% \end{remark}

\noindent{\bf Plan of the paper.}
The remainder of the paper is devoted to proving the main theorem. It turns out that the equation can be conveniently trisected and dealt with term-by-term: considering separately the pressure-driven advection, drift, and proliferation. Indeed, it is the diffusion term that governs the rate of convergence.
The proof is therefore structured as follows. In Sections~\ref{sec: singular} and~\ref{sec: power law} we prove the main theorem for the choice of the singular pressure in Eq.~\eqref{eq: singular law} and the power law pressure in Eq.~\eqref{eq: power law} in the absence of reactions and drift. Then in Section~\ref{sec: drift-reaction} we explain how to treat the additional terms.

\noindent{\bf Notation.} Henceforth we shall usually suppress the dependence on time and space of the quantities of interest, only exhibiting the time variable in the final results. Similarly, for the sake of brevity, all space integration should be understood with respect to the $d$-dimensional Lebesgue measure.

\section{Singular pressure law}
\label{sec: singular}
In this and the following section, to explain the main idea in a simple situation, we ignore the drift and proliferation terms in Eq.~\eqref{eq: equation n} and consider only the nonlinear diffusion equation
\begin{equation}
\label{eq: simple singular}
    \partialt{n\e}-\nabla\cdot(n\e\nabla p\e) = 0,
\end{equation}
assuming now the pressure law as in Eq.~\eqref{eq: singular law}.
In this case we can rewrite Eq.~\eqref{eq: simple singular} as
\begin{equation}
    \partialt{n\e} - \Delta H\e(n\e) = 0,
\end{equation}
with
\begin{equation}\label{eq: H epsilon}
    H\e(n\e):=\int_0^{n\e} s p\e'(s)\dx{s} = \epsilon \frac{n\e}{1-n\e} +\epsilon \ln(1-n\e).
\end{equation}
Recall that we have the uniform bound $n\e<1$, so that the right-hand side above is well-defined with $\ln(1-n\e) \leq 0$. 

Let us take $\epsilon>\epsilon'>0$. We subtract the equation for $n\ep$ from the equation for $n_\epsilon$ to obtain
\begin{equation}\label{eq: ne - nep}
    \frac{\partial\prt{n\e- n\ep}}{\partial t} - \Delta \prt{H\e(n\e) -H\ep(n\ep)}= 0.
\end{equation}
Now we pose Eq.~\eqref{eq: phi} for both solutions $n\e$ and $n\ep$
\begin{equation*}
    -\Delta \varphi\e = n\e, \qquad -\Delta \varphi\ep = n\ep.
\end{equation*}
Then Eq.~\eqref{eq: ne - nep} reads
\begin{equation}
    -\Delta\frac{\partial\prt{\varphi\e - \varphi\ep}}{\partial t} - \Delta\prt*{H\e(n\e) -H\ep(n\ep)}= 0,
\end{equation}
and we test it against $\varphi\e-\varphi\ep$ to derive
\begin{equation*} 
    \frac{1}{2}\ddt \int_{\R^d} \left|\nabla\prt{\varphi\e - \varphi\ep}\right|^2 = \int_{\R^d} (n\e-n\ep)\prt{H\ep(n\ep) -H\e(n\e)}.
\end{equation*}
% Using definition \eqref{eq: H epsilon}, the previous equation reads
% \begin{equation}\label{eq: complete eps}
%       \frac{1}{2}\ddt \int_{\R^d} \left|\nabla\prt{\varphi\e - \varphi\ep}\right|^2 = \int_{\R^d} (n\e-n\ep)\prt*{\epsilon' \frac{n\ep}{1-n\ep} - \epsilon \frac{n\e}{1-n\e} - \epsilon \ln(1-n\e) + \epsilon' \ln(1-n\ep)}.
% \end{equation}
We now proceed to estimate the right-hand side. On the set $\{n\e>n\ep\}$ we make use of non-negativity of $H\e(n\e)$ and non-positivity of the logarithmic term in $H\ep(n\ep)$ to write
\begin{align*}
    \int_{\{n\e>n\ep\}} (n\e-n\ep)\prt{H\ep(n\ep) -H\e(n\e)} &\leq \epsilon'\int_{\{n\e>n\ep\}}(n\e-n\ep)\frac{n\ep}{1-n\ep}\leq %\epsilon'\int_{\{n\e>n\ep\}}(1-n\ep)\frac{n\ep}{1-n\ep} =
    \epsilon'\int_{\{n\e>n\ep\}}n\ep.
\end{align*}
Similarly, on the complementary set $\{n\e\leq n\ep\}$ we have
\begin{align*}
    \int_{\{n\e\leq n\ep\}} (n\e-n\ep)\prt{H\ep(n\ep) -H\e(n\e)} &\leq \epsilon\int_{\{n\e\leq n\ep\}}(n\ep-n\e)\frac{n\e}{1-n\e}\leq %\epsilon'\int_{\{n\e>n\ep\}}(1-n\ep)\frac{n\ep}{1-n\ep} =
    \epsilon\int_{\{n\e\leq n\ep\}}n\e.
\end{align*}
Therefore we have
\begin{align*}
      \frac{1}{2}\ddt \int_{\R^d} \left|\nabla\prt{\varphi\e - \varphi\ep}\right|^2 &\leq \epsilon \int_{\{n\e\leq n\ep\}} n\e + \epsilon' \int_{\{n\e\geq n\ep\}} n\ep\\[0.3em]
      &\leq \epsilon \|n\e (t)\|_{L^1(\R^d)} + \epsilon' \|n\ep (t)\|_{L^1(\R^d)},
\end{align*}
and since $n\e$ and $n\ep$ are uniformly bounded in $L^\infty((0,T),L^1(\R^d))$ with respect to $\epsilon$ and $\epsilon'$, we obtain
\begin{align}
      \frac{1}{2}\ddt \int_{\R^d} \left|\nabla\prt{\varphi\e - \varphi\ep}(t)\right|^2 \leq C (\epsilon + \epsilon').
\end{align}
Integrating in time on $[0,t)$ we then have
\begin{align}
      \frac{1}{2}\int_{\R^d} \left|\nabla\prt{\varphi\e - \varphi\ep}(t)\right|^2 \leq C t (\epsilon + \epsilon') + \int_{\R^d}\left|\nabla\prt{\varphi\e - \varphi\ep}(0)\right|^2.
\end{align}
%where we used that $n\e$ and $n\ep$ emanate from the same initial data.
It follows that the sequence $(\nabla\varphi\e)\e$ converges in the strong topology of $L^\infty((0,T),L^2(\R^d))$ to $\nabla\varphi_\infty$. 
%Consequently $n\e$ converges to $n_\infty$ in $L^\infty(0,T;\dot{H}^{-1})$.
Consequently, letting $\epsilon' \rightarrow 0$, we deduce the following rate for the convergence $n\e\to n_\infty$ in the space $\dot{H}^{-1}(\R^d)$
% \begin{equation}
%   \sup_{t\in[0,T]} \|n\e(t) - n_\infty (t)\|_{\dot{H}^{-1}(\R^d)} \leq C(T) \sqrt{\epsilon},
% \end{equation}
\begin{equation}
   \|n\e(t) - n_\infty (t)\|_{\dot{H}^{-1}(\R^d)} \leq C\sqrt{t} \sqrt{\epsilon} + \|n\e^0 - n_\infty^0\|_{\dot{H}^{-1}(\R^d)},
\end{equation}
where $C$ is a positive constant defined as follows
\begin{equation*}
    C= \sqrt{2 \sup_{\epsilon>0}\|n\e\|_{L^1((0,T)\times\R^d))}}.
\end{equation*}
\noindent
Assuming the additional $BV$ bounds for the initial data, we get from Lemma~\ref{lemma: a-priori} that $n\e$ is uniformly bounded in $L^\infty(0,T;BV(\R^d))$, and we can use Eq.~\eqref{eq: interpolation} to obtain the rate $\epsilon^{1/4}$, as announced in Eq.~\eqref{eq: rate in L4/3}. Thus Theorems~\ref{thm: main_negative norm} and~\ref{thm: main_strong norm} are proved in this special case.

\section{Power law}
\label{sec: power law}
Let us now consider Eq.~\eqref{eq: simple singular} with the pressure law given by Eq.~\eqref{eq: power law} and demonstrate that the method employed in the previous section remains valid. We now have the porous medium equation
\begin{equation}
    \partialt{n\g} - \Delta n\g^\gamma = 0.
\end{equation}
Let us recall that there exists a positive constant $p_M$ such that
\begin{equation*}
    0\leq \frac{\gamma}{\gamma-1} n\g^{\gamma-1} \leq p_M, \qquad 0 \leq \frac{\gamma'}{\gamma'-1} n\gp^{\gamma'-1} \leq p_M.
\end{equation*}
%%%%%%%%%%%%%%%%%%%%%%%
Let us define 
\[
    c_\gamma := \prt*{\frac{\gamma-1}{\gamma}}^{\frac{1}{\gamma-1}}p_M^{1/(\gamma-1)} \quad \text{and} \quad \tilde{n}\g := \frac{n\g}{c\g}.
\]
Then it immediately follows that $\tilde{n}\g \leq 1$ and solves the equation
\begin{equation*}
    \partial_t \tilde{n}\g - \Delta(c\g^{\gamma-1} \tilde{n}\g^\gamma)=0.
\end{equation*}
Following the same argument as before, we define $\varphi\g$ and $\tilde{\varphi}\g$ by
\begin{equation*}
   -\Delta \varphi\g = n\g, \qquad  -\Delta \tilde{\varphi}\g = \tilde{n}\g, 
\end{equation*}
\ie $\tilde{\varphi}\g = \varphi\g / c\g$. 

Without loss of generality, we take $1<\gamma<\gamma'$. 
Now we subtract the equation for $\tilde{n}\gp$ from the equation for $\tilde{n}\g$ to obtain
\begin{equation}\label{eq: ng - ngp new}
    \frac{\partial\prt{\tilde{n}\g- \tilde{n}\gp}}{\partial t} - \Delta \prt{c\g^{\gamma-1}\tilde{n}\g^\gamma - c\gp^{\gamma'-1} \tilde{n}\gp^{\gamma'}}= 0.
\end{equation}
Then from Eq. \eqref{eq: ng - ngp new} we have
\begin{equation*} 
   -\Delta \frac{\partial\prt{\tilde\varphi\g - \tilde\varphi\gp}}{\partial t}-\Delta \prt{  c\g^{\gamma-1}\tilde{n}\g^\gamma -c\gp^{\gamma'-1}\tilde{n}\gp^{\gamma'}}= 0,
\end{equation*}
and we test it against $\tilde\varphi\g - \tilde\varphi\gp$ to deduce 
\begin{equation*}
\label{eq: PME terms}
\begin{split}
    \frac{1}{2}\ddt \int_{\R^d} \left|\nabla\prt{\tilde\varphi\g - \tilde\varphi\gp}\right|^2 &= \int_{\R^d} \prt{c\g^{\gamma-1}\tilde{n}\g^\gamma -c\gp^{\gamma'-1}\tilde{n}\gp^{\gamma'}}(\tilde{n}\gp-\tilde{n}\g)\\
    &\leq \int_{\R^d} c\g^{\gamma-1}\tilde{n}\g^\gamma (1-\tilde{n}\g) + \int_{\R^d} c\gp^{\gamma'-1}\tilde{n}\gp^{\gamma'} (1-\tilde{n}\gp).
    \end{split}
\end{equation*}
It is easy to see that for $0\leq s \leq 1$ it holds $s^\gamma (1-s) \leq \frac{s}{\gamma}$. Hence, we have
\begin{align*}
     \frac{1}{2}\ddt \int_{\R^d} \left|\nabla\prt{\tilde\varphi\g - \tilde\varphi\gp}\right|^2
     &\leq c\g^{\gamma-1} \frac 1 \gamma \int_{\R^d} \tilde{n}\g +  c\gp^{\gamma'-1} \frac{1}{\gamma'} \int_{\R^d} \tilde{n}\gp\\
    &\leq \prt*{\frac{\gamma-1}{\gamma} p_M  \sup_{\gamma}\|\tilde{n}\g(t)\|_{L^1(\R^d)}} \frac 1 \gamma + \prt*{\frac{\gamma'-1}{\gamma} p_M  \sup_{\gamma'}\|\tilde{n}\gp(t)\|_{L^1(\R^d)}} \frac{1}{\gamma'}\\
    &\leq  C \prt*{\frac 1 \gamma + \frac{1}{\gamma'}},
\end{align*}
where in the last inequality we used the fact that by Lemma~\ref{lemma: a-priori} $n\g$ is uniformly bounded in $L^\infty(0,T;L^1(\R^d))$. 
%Therefore, there exists a constant $C>0$ (independent of $\gamma$) that controls both $\tilde{C}(\gamma)$ and $\tilde{C}(\gamma')$.
Finally, we remove the scaling using the triangle inequality
\begin{align*}
    \|\nabla(\varphi\g -\varphi\gp)&(t)\|_{L^2(\R^d)}^2\\[0.3em]
    &\leq  \|\nabla(\varphi\g - \tilde\varphi\g)(t)\|_{L^2(\R^d)}^2+ \|\nabla(\tilde\varphi\gp -\varphi\gp)(t)\|_{L^2(\R^d)}^2+ \|\nabla(\tilde\varphi\g -\tilde\varphi\gp)(t)\|_{L^2(\R^d)}^2\\[0.3em]
    &\leq \left|1-\frac{1}{c\g}\right|^2 \|\nabla \varphi\g(t)\|_{L^2(\R^d)}^2 + \left|1-\frac{1}{c\gp}\right|^2 \|\nabla \varphi\gp(t)\|_{L^2(\R^d)}^2\\[0.3em]
   & \qquad + C t\prt*{ \frac 1 \gamma +  \frac{1}{\gamma'}} +  \|\nabla(\tilde\varphi\g -\tilde\varphi\gp)(0)\|_{L^2(\R^d)}^2\\
   &\leq \frac{1}{\gamma} \prt*{C t + \gamma  \left|1-\frac{1}{c\g}\right|^2 \sup_\gamma \|n\g(t)\|_{\dot{H}^{-1}(\R^d)}} \\
   &\qquad + \frac{1}{\gamma'} \prt*{C t + \gamma'  \left|1-\frac{1}{c\gp}\right|^2 \sup_{\gamma'} \|n\gp(t)\|_{\dot{H}^{-1}(\R^d)}} +  \|\nabla(\tilde\varphi\g -\tilde\varphi\gp)(0)\|_{L^2(\R^d)} ^2.
\end{align*}
By the definition of $c\g$, $\gamma \left|1-\frac{1}{c\gp}\right|^2 \to 0$ as $\gamma \to \infty$. Thus, we have
\begin{equation*}
     \|\nabla(\varphi\g -\varphi\gp)(t)\|_{L^2(\R^d)}^2\leq (C t + C)\left(\frac 1 \gamma + \frac{1}{\gamma'}\right) +  \|\nabla(\tilde\varphi\g -\tilde\varphi\gp)(0)\|_{L^2(\R^d)}^2.
\end{equation*}
By the same argument, we find
\begin{align*}
    \|\nabla(\tilde\varphi\g -\tilde\varphi\gp)(0)\|_{L^2(\R^d)}^2 
    %&\leq \left|1-\frac{1}{c\g}\right|^2 \|\nabla\varphi\g(0)\|_{L^2(\R^d)}^2+ \left|1-\frac{1}{c\gp}\right|^2 \|\nabla\varphi\gp(0)\|_{L^2(\R^d)}^2 + \|\nabla(\varphi\g-\varphi\gp)(0)\|_{L^2(\R^d)}^2\\
    %&
    \leq C\left(\frac 1 \gamma + \frac{1}{\gamma'}\right) + \|\nabla(\varphi\g-\varphi\gp)(0)\|_{L^2(\R^d)}^2.
\end{align*}
Finally, we conclude
\begin{equation}
\label{eq: weak rate for PME}
     \|\nabla(\varphi\g -\varphi\gp)(t)\|_{L^2(\R^d)}^2\leq (C t + C)\left(\frac 1 \gamma + \frac{1}{\gamma'}\right) +  \|\nabla(\varphi\g -\varphi\gp)(0)\|_{L^2(\R^d)}^2.
\end{equation}

\commentout{
\textbf{HOLD VERSION (WRONG)}
Without loss of generality we assume $p_M\geq 1$. For $1<\gamma < \gamma'$, we introduce the rescaled densities
\[
\tilde{n}\g := \frac{n\g}{p_M^{1/(\gamma-1)}} \; \text{ and } \; \tilde n\gp := \frac{n\gp}{p_M^{1/(\gamma-1)}}.
\]
Since $p_M\geq 1$ and $\gamma < \gamma'$, these new density functions are never greater than 1, namely
\begin{equation*}
    0\leq \tilde{n}\g \leq 1, \qquad   0\leq \tilde{n}\gp \leq 1.
\end{equation*}
Let us notice that if $p_M\leq 1$ then we directly have $n\g, n\gp\leq 1$, so there is no need to introduce the rescaled densities $\tilde{n}\g$.

Now we subtract the equation for $n\gp$ from the equation for $n\g$ to obtain
\begin{equation}\label{eq: ng - ngp}
    \frac{\partial\prt{n\g- n\gp}}{\partial t} - \Delta \prt{n\g^\gamma - n\gp^{\gamma'}}= 0.
\end{equation}
As before, we set
\begin{equation*}
    -\Delta \varphi\g = n\g, \qquad -\Delta \varphi\gp = n\gp.
\end{equation*}
Then from Eq. \eqref{eq: ng - ngp} we have
\begin{equation*} 
    \frac{\partial\prt{\varphi\g - \varphi\gp}}{\partial t} + n\g^\gamma -n\gp^{\gamma'}= 0,
\end{equation*}
and we test it against $(n\g-n\gp) = - \Delta (\varphi\g - \varphi\gp)$ to deduce 
\begin{equation} \label{eq: complete gamma}
    \frac{1}{2}\ddt \int_{\R^d} \left|\nabla\prt{\varphi\g - \varphi\gp}\right|^2 + \int_{\R^d} \prt{n\g^\gamma -n\gp^{\gamma'}}(n\g-n\gp)= 0.
\end{equation}
Now we multiply by $p_M^{1/(\gamma-1)}$ and obtain 
\begin{equation*} 
    \frac{p_M^{1/(\gamma-1)}}{2}\ddt \int_{\R^d} \left|\nabla\prt{\varphi\g - \varphi\gp}\right|^2 = \int_{\R^d} \prt{n\g^\gamma - n\gp^{\gamma'}}(\tilde n\gp-\tilde n\g).
\end{equation*}
Adding and subtracting 1, the right-hand side can be written as
\begin{equation}
\label{eq: K2}
\begin{split}
 \int_{\R^d} \prt{n\g^\gamma - n\gp^{\gamma'}}(\tilde n\gp-\tilde n\g)&=  \underbrace{\int_{\R^d} n\g^\gamma (1-\tilde n\g)  + \int_{\R^d} n\gp^{\gamma'} (1-\tilde n\gp)}_{\mathcal{K}_1}\\
 &\;\;+\underbrace{\int_{\R^d} (\tilde n\g -1) n\gp^{\gamma'} + \int_{\R^d} (\tilde n\gp -1) n\g^{\gamma}}_{\mathcal{K}_2}.
 \end{split}
\end{equation} 
Since by definition $\tilde n\g, \tilde n\gp\leq 1$, the term $\mathcal{K}_2$ is non-positive.

It remains to estimate the term $\mathcal{K}_1$. Let $f\g:[0,1]\rightarrow \R$ be the non-negative function defined as $f\g(s):= s^{\gamma-1}(1-s)$. Then it holds that
\[
    f\g'(s) = s^{\gamma-2} (\gamma-1 -\gamma s),
\]
and hence
$$ f\g(s) \leq \frac{1}{\gamma} \left(\frac{\gamma-1}{\gamma}\right)^{\gamma-1}\leq \frac 1 \gamma,$$
for all $0\leq s \leq 1.$
Using the definition of $f\g$ we compute
\begin{align*}
    \mathcal{K}_1 &= p_M \int_{\R^d} n\g \tilde n\g^{\gamma-1} (1-\tilde n\g)  + p_M \int_{\R^d} n\gp \tilde n\gp^{\gamma'-1} (1-\tilde n\gp) \\[0.3em]
    &= p_M \int_{\R^d} \prt*{n\g f\g(\tilde n\g) + n\gp f\gp(\tilde n\gp)}\\[0.3em]
    &\leq p_M \left(\frac 1 \gamma \int_{\R^d} n\g + \frac{1}{\gamma'} \int_{\R^d} n\gp\right) \\[0.3em]
    & \leq C \left(\frac 1 \gamma + \frac{1}{\gamma'}\right)
\end{align*}
where $C$ is a positive constant that depends on $p_M$ and the $L^1$-norm of $n^0$. 

We gather the above computation into Eq.~\eqref{eq: complete gamma} to obtain
\begin{align}
      \frac{p_M^{1/\gamma}}{2}\ddt \int_{\R^d} \left|\nabla\prt{\varphi\g - \varphi\gp}(t)\right|^2 \leq C \prt*{\frac{1}{\gamma}+ \frac{1}{\gamma'}} .
\end{align}
Integrating in time we get
\begin{align}
\label{eq: weak rate for PME}
      \int_{\R^d} \left|\nabla\prt{\varphi\g - \varphi\gp}(t)\right|^2 \leq Ct\prt*{\frac{1}{\gamma}+ \frac{1}{\gamma'}} + \int_{\R^d} \left|\nabla\prt{\varphi\g - \varphi\gp}(0)\right|^2.
\end{align}

\textbf{NOW IS GOOD AGAIN}}
\noindent
Consequently, arguing as before and letting $\gamma' \rightarrow \infty$, we find
\begin{equation}
    \|n\g(t) - n_\infty (t)\|_{\dot{H}^{-1}(\R^d)} \leq \frac{C\sqrt{t}+C}{\sqrt{\gamma}} + \|n\g^0 - n_\infty^0\|_{\dot{H}^{-1}(\R^d)}.
\end{equation}
Again, under the additional $BV$ assumptions we obtain~\eqref{eq: rate in L4/3} thanks to the interpolation inequality.

\section{Including drift and reaction terms}
\label{sec: drift-reaction}
Having obtained the announced rate of convergence due to the nonlinear diffusion term, we now exhibit that we can include the drift and reaction terms. In fact, due to our assumptions on the proliferation rate and the chemical potential, all the additional terms will either have an appropriate sign, or be absorbed into the $L^2$-norm of the potential $\varphi$. We now write Eq.~\eqref{eq: equation n} as follows
\begin{equation}
    \label{eq: equation n with drift-reaction}
    \partialt{n\g} - \Delta A\g(n\g) = \nabla \cdot (n\g\nabla V) + n\g g,
\end{equation}
where $g=g(t,x)$ and $A\g$ is chosen appropriately depending on the state law for the pressure.
As seen before, there is no harm in assuming the uniform bound $n\leq1$. Then, arguing in the same way as previously, we obtain
\begin{align*}
    \frac{1}{2}\ddt &\int_{\R^d} \left|\nabla\prt{\varphi\g - \varphi\gp}\right|^2 + \int_{\R^d} (n\g-n\gp)(A\g(n\g)-A\gp(n\gp))\\[0.3em] 
    &= -\int_{\R^d}(n\g-n\gp)\nabla(\varphi\g-\varphi\gp)\cdot\nabla V + \int_{\R^d}g(t,x)(n\g-n\gp)(\varphi\g-\varphi\gp)\\[0.3em]
    &=\int_{\R^d}\Delta\prt*{\varphi\g-\varphi\gp}\nabla(\varphi\g-\varphi\gp)\cdot\nabla V
    -\int_{\R^d}g(t,x)\Delta(\varphi\g-\varphi\gp)(\varphi\g-\varphi\gp).
\end{align*}
It only remains to consider the two new terms on the right-hand side. For the first one we can write
\begin{align*}
    \int_{\R^d}&\Delta\prt*{\varphi\g-\varphi\gp}\nabla(\varphi\g-\varphi\gp)\cdot\nabla V\\[0.3em]
    &=-\int_{\R^d}\nabla(\varphi\g-\varphi\gp)^{T}D^2(\varphi\g-\varphi\gp)\nabla V -\int_{\R^d}\nabla(\varphi\g-\varphi\gp)^{T}D^2V\nabla(\varphi\g-\varphi\gp) \\[0.3em]
    &=-\frac12\int_{\R^d}\nabla|\nabla(\varphi\g-\varphi\gp)|^2\cdot\nabla V-\int_{\R^d}\nabla(\varphi\g-\varphi\gp)^{T}D^2V\nabla(\varphi\g-\varphi\gp)\\[0.3em] %-\int_{\R^d}\nabla(\varphi\g-\varphi\gp)^{T}D^2V\nabla(\varphi\g-\varphi\gp)\\
    &=\frac12\int_{\R^d}|\nabla(\varphi\g-\varphi\gp)|^2\Delta V -\int_{\R^d}\nabla(\varphi\g-\varphi\gp)^{T}D^2V\nabla(\varphi\g-\varphi\gp)\\[0.3em]
    &\leq -\lambda\int_{\R^d}|\nabla(\varphi\g-\varphi\gp)|^2, %\frac12\int_{\R^d}|\nabla(\varphi\g-\varphi\gp)|^2(\Delta V)_+,
\end{align*}
where we have integrated by parts and used assumptions~\eqref{eq: assumption V potential}.
For the remaining term we integrate by parts to obtain
\begin{align*}
    \int_{\R^d} & g\abs*{\nabla(\varphi\g-\varphi\gp)}^2 + \int_{\R^d} (\varphi\g-\varphi\gp)\nabla(\varphi\g-\varphi\gp)\cdot\nabla g \\
    &\leq \norm{g_+}_{L^\infty((0,T)\times\R^d)}\int_{\R^d}\abs*{\nabla(\varphi\g-\varphi\gp)}^2 + \underbrace{\int_{\R^d}(\varphi\g-\varphi\gp)\nabla(\varphi\g-\varphi\gp)\cdot\nabla g}_{\mathcal{A}}.
\end{align*}
In case of $d=2$, we suppose that $g$ satisfies Assumption~\eqref{eq: assumption a lap}. Then we can integrate by parts in the last term to obtain
\begin{equation}
    \mathcal{A} = -\frac12\int_{\R^d} |\varphi\g-\varphi\gp|^2 \Delta g \leq 0.
\end{equation}

\noindent If instead $d\geq3$, we may alternatively assume that $g$ satisfies Assumption~\eqref{eq: assumption growth 2} or Assumption~\eqref{eq: assumption a grad}.
In the first case, using successively the inequalities of H\"older and Sobolev we obtain
\begin{equation*}
    \mathcal{A} \leq \frac{1}{2} \|\varphi\g - \varphi\gp\|^2_{L^{2^*}(\R^d)} \||\Delta g|_-\|_{L^{d/2}(\R^d)}\leq C_S  \||\Delta g|_-\|_{L^{d/2}(\R^d)} \int_{\R^d} |\nabla(\varphi\g - \varphi\gp)|^2,
\end{equation*}
where $C_S$ denotes the constant from Sobolev inequality, and $2^* = \frac{2d}{d-2}$ is the Sobolev conjugate exponent.
Otherwise, if $g$ satisfies Eq.~\eqref{eq: assumption a grad}, in order to estimate the term $\mathcal{A}$ we do not integrate it by parts and we use in turn the inequalities of Young, H\"older and Sobolev to obtain
\begin{align*}
    2\mathcal{A} &\leq \int_{\R^d}\abs*{\nabla(\varphi\g-\varphi\gp)}^2 + \int_{\R^d}\abs*{(\varphi\g-\varphi\gp)}^2|\nabla g|^2 \\[0.3em]
    &\leq \int_{\R^d}\abs*{\nabla(\varphi\g-\varphi\gp)}^2 + \norm{\varphi\g-\varphi\gp}_{L^{2^*}(\R^d)}^2\norm{\nabla g}_{L^d(\R^d)}^2 \\[0.3em]
    &\leq \prt*{1 + C_S\norm{\nabla g}_{L^d(\R^d)}^2}\int_{\R^d}\abs*{\nabla(\varphi\g-\varphi\gp)}^2.
\end{align*}
\noindent
Therefore we have
\begin{align*}
    \frac{1}{2}\ddt \int_{\R^d} \left|\nabla\prt{\varphi\g - \varphi\gp}\right|^2 + \int_{\R^d} (n\g-n\gp)(A\g(n\g)-A\gp(n\gp))
    \leq C\int_{\R^d} \left|\nabla\prt{\varphi\g - \varphi\gp}\right|^2.
\end{align*}
Assuming for concreteness the power law pressure, using inequality~\eqref{eq: weak rate for PME} and a Gronwall inequality, we deduce
\begin{equation}
    \sup_{t\in[0,T]}\norm{\nabla(\varphi_\gamma - \varphi_{\gamma'})(t)}_{L^2(\R^d)} \leq C\prt*{\frac{1}{\sqrt{\gamma}}+\frac{1}{\sqrt{\gamma'}}} + \norm{\nabla(\varphi_\gamma - \varphi_{\gamma'})(0)}_{L^2(\R^d)}.
\end{equation}
Finally, passing to the limit $\gamma'\to\infty$, we conclude the proof of Theorem~\ref{thm: main_negative norm}. Using the uniform $BV$-bound and Eq.~\eqref{eq: interpolation} we obtain Theorem~\ref{thm: main_strong norm}.
%This concludes the proof of Theorem~\ref{thm: main theorem}.

\subsection{Limit relation between $n_\infty$ and $p_\infty$}
Here we prove relation \eqref{eq: relation} between the limit density and pressure, where $p_\infty$ is defined as the weak$^*$ limit (up to a sub-sequence) of $p_\gamma$ in $L^\infty((0,T)\times\R^d)$. 
\begin{proof}[Proof of Theorem~\ref{thm: relation}]
The relation is a straightforward consequence of the main estimate obtained in Section~\ref{sec: power law}.
We inspect Eq.~\eqref{eq: PME terms}, this time not ignoring the non-positive terms.
After integration in time, these terms can be bounded as follows, using Eq.~\eqref{eq: weak rate for PME}
% Let us notice that the term $\mathcal{K}_2$ in Eq.~\eqref{eq: K2} is non-positive. Hence, from Eq.~\eqref{eq: weak rate for PME} we infer the following additional bound on $-\mathcal{K}_2$
\begin{equation*}
   \int_0^T\!\!\int_{\R^d} \tilde{n}\gp^{\gamma'}(1-\tilde n\g)c\gp^{\gamma'-1}  + \int_0^T\!\!\int_{\R^d} \tilde{n}\g^{\gamma}(1-\tilde n\gp ) c\g^{\gamma-1}  \leq C(T)\prt*{\frac{1}{\gamma}+ \frac{1}{\gamma'}} + \int_{\R^d} \left|\nabla\prt{\varphi\g - \varphi\gp}(0)\right|^2.
\end{equation*}
Now let $\psi$ be a compactly supported test function and consider the quantity
\begin{align*}
    \abs*{\int_0^T\!\!\int_{\R^d}\psi \tilde n\g^\gamma(1-\tilde n\gp)} \leq \norm{\psi}_{\infty} \int_0^T\!\!\int_{\supp{\psi}} \tilde n\g^\gamma(1-\tilde n\gp) = \norm{\psi}_{\infty}\int_0^T\!\!\int_{\supp\psi} \tilde p\g^{\frac{\gamma}{\gamma-1}}(1-\tilde n\gp).
\end{align*}
Using weak lower semicontinuity of convex functionals and weak$^*$ convergence of the pressure and the density, we can pass to the limit with $\gamma'$ and $\gamma$ in turn to obtain
\begin{equation*}
    \int_0^T\!\!\int_{\R^d} \psi p_\infty (1- n_\infty) = 0,
\end{equation*}
which concludes the proof.
% Since both integrals on the left-hand side are non-negative, the above estimate holds for both terms individually. \TD{Let us notice that from the uniform $L^\infty$-bounds on the density provided by Lemma~\ref{lemma: a-priori}, it is immediate to infer the uniform boundedness of the pressure in $L^1((0,T)\times\R^d)$. Therefore, we find $\tilde{n}_{\gamma'}^{\gamma'} \rightharpoonup p_\infty$ in $L^1((0,T)\times\R^d)$ up to a sub-sequence. Integrating in time and taking $\gamma'\rightarrow\infty$ in the above inequality, we obtain
% \begin{equation*}
%   \int_0^T\int_{\R^d} (1-\tilde n\g) p_\infty  \leq \frac{C(T)}{\gamma} + \int_{\R^d} \left|\nabla\prt{\varphi\g - \varphi_\infty}(0)\right|^2.
% \end{equation*}
% We conclude the proof by passing to the limit as $\gamma\to\infty$, knowing that $\tilde{n}\g\rightarrow n_\infty$ weakly in $L^1((0,T)\times\R^d)$, and by assumption $\nabla \varphi_\gamma(0)\to\nabla\varphi_\infty(0)$ strongly in $L^2((0,T)\times\R^d)$.}
\qedhere
\end{proof}

\section{Conclusions and open problems}
We computed the rate of convergence of the solutions of a reaction-advection-diffusion equation of porous medium type in the incompressible limit. Our result in a negative Sobolev's norm can be interpolated with uniform $BV$-estimates in order to find a rate in any $L^p$-space for $1<p<\infty$.

How to assess the accuracy of our estimate remains an open problem. For the pure porous medium equation it might seem tempting to attempt a calculation for the illustrious example of the Barenblatt solution (taking as initial data the solution at some time $t>0$). However, a direct calculation shows that in this case the data is ``ill prepared'' in the sense that it converges (in $L^1$) to its limit profile with too slow a rate of $\sim\ln{\gamma}/\gamma$.
It is unclear how to approach the question of optimality in general.
We expect that the ``worst'' rate would be exhibited by a \textit{focusing solution}, whose support is initially contained outside of a compact set and closes up in finite time, thus generating a singularity.

Another challenging problem is to find an estimate for the convergence rate of the pressure, for which the method used above seems inapplicable as it is not clear how to relate the quantities $p\g-p\gp$ and $\varphi\g-\varphi\gp$. Consequently, we are also currently unable to treat more general, pressure dependent, reaction terms. Finally, it would be of interest to investigate whether it is possible to strengthen the estimate of Theorem~\ref{thm: main_negative norm} to Lebesgue norms without interpolation with $BV$. One advantage of any such alternative approach could be to allow for passing to the incompressible limit when $BV$ bounds are not available, as is the case for systems of equations like~\eqref{eq: equation n}. Additionally, it could allow for estimating the rate of convergence in the $L^1$-norm rather than the seemingly arbitrary $L^{4/3}$-norm.

%Moreover, the Barenblatt profile has not the worst regularity, which is achieved by the \textit{focusing solution}, whose support is initially contained outside of a compact set that closes up in finite time, thus generating a singularity.
%To find an estimate for the convergence rate of the pressure remains open and represents one of the most challenging problem in this framework. 
%Extending our results in the case of a general reaction term, possibly pressure-dependent is also of interest.
% \discuss{
% \begin{enumerate}
%     \item Optimality of our rate: maybe can calculate for a Barenblatt profile? This is not the worst possible case (e.g. the focusing singularity), but it's unclear how to assess optimality in general. Mention the case when data is a sequence; will have to converge with the predicted rate ("well-prepared").
%     \item Convergence rate for the pressure
%     \item General rection term $g=G(p)$. Currently not clear how to relate $p\g-p\gp$ to $\varphi\g-\varphi\gp$...
%     \item Can we go to Lebesgue spaces w/o BV bounds
% \end{enumerate}
% }

\section*{Acknowledgements}
This project has received funding from the European Union's Horizon 2020 research and innovation program under the Marie Skłodowska-Curie (grant agreement No 754362). It has also received funding from European Research Council (ERC) under the European Union’s Horizon 2020 research and innovation program (grant agreement No 740623). T.D.\ was partially supported by National Science Center (Poland), grant number 2018/31/N/ST1/02394, and the Foundation for Polish Science (FNP).

%%%%%%%%%%%%%%%% APPENDIX %%%%%%%%%%%%%%%%%5
 
\begin{appendices}
\commentout{
\section{Proof of the interpolation inequality}
\label{sec: intrpolation proof}
In~\cite{Cohen_2003} Cohen et al.\ use wavelet decompositions to derive a family of interpolation inequalities between the space $BV$ and a scale of Besov spaces. The inequality in Lemma~\ref{lemma: interpolation}, \cf Eq.~\eqref{eq: interpolation}, is a special case of one of their results, see~\cite[Theorem~1.5]{Cohen_2003}.
An alternative proof was given by Cinti and Otto in~\cite{CintiOtto2016} for periodic functions. We repeat their proof here.

\begin{lemma}\label{lemma: interpolationA}
For all $t\in[0,T]$, there exists a constant $C=C(d,T)>0$, such that
\begin{equation*}
   \norm{n(t)}_{L^{4/3}(\R^d)} \leq C |n(t)|_{BV(\R^d)}^{1/2} \norm{\nabla\varphi(t)}_{L^2(\R^d)}^{1/2}.
\end{equation*}
\end{lemma}

\begin{proof}
Let us notice that thanks to a scaling argument it is enough to prove
\begin{equation}\label{eq: intermidiate}
    \int_{\R^d} |n|^{4/3} \leq C \prt*{|n|_{BV} + \norm{\nabla\varphi}_{L^2}^2}.
\end{equation}
Let $L>0$ be a positive constant to be defined later. By the change of variables $x:= L \hat{x}$ we have
\begin{equation*}
    \int_{\R^d}|n|^{4/3} \leq C \prt*{L^{-1}|n|_{BV} + L^2 \norm{\nabla \varphi}_{L^2}^2}.
\end{equation*}
Taking $L:=|n|_{BV}^{1/3}\norm{\nabla \varphi}_{L^2}^{-2/3}$ we obtain
\begin{equation*}
    \int_{\R^d}|n|^{4/3} \leq C \prt*{|n|_{BV}^{2/3} \ \norm{\nabla \varphi}_{L^2}^{2/3}}.
\end{equation*}
Finally, raising to the power $3/4$ we find
\begin{equation*}
    \norm{n}_{4/3} \leq C \prt*{|n|_{BV}^{1/2} \ \norm{\nabla \varphi}_{L^2}^{1/2}}.
\end{equation*}
It now remains to prove Eq. \eqref{eq: intermidiate}.
 
We define $\chi_\mu$ as the signed characteristic function of the $\mu$-level set of $n$, \ie
\begin{equation}\label{chimu}
    \chi_\mu := \mathds{1}_{\{n>\mu\}} - \mathds{1}_{\{n<\mu\}} .
\end{equation}
Let $\psi_\alpha(x):= \frac{1}{\alpha^d} \psi(\frac{x}{\alpha})$ be a sequence of smooth mollifiers, with $\psi$ a non-negative, smooth symmetric function, compactly supported in the unit ball, and with unit mass.% equal to 1, \ie \ $\int \psi=1$.
We set $n_\alpha:= n \star \psi_\alpha$. Let $M>0$ be a positive constant.
 
We compute
\begin{align*}
    \int_{\{|n|>\mu\}} |n|=\int_{\R^d} \chi_\mu n &= \int_{\R^d} (\chi_\mu - \chi_{\mu,\alpha}) n + \int_{\R^d} \chi_{\mu,\alpha} n\\
    &= \int_{\{|n|\leq M \mu\}} (\chi_\mu - \chi_{\mu,\alpha}) n + \int_{\{|n|> M \mu\}} (\chi_\mu - \chi_{\mu,\alpha}) n + \int_{\R^d} \chi_{\mu,\alpha} n.
\end{align*}
Since by definition of $\chi_\mu$ we have $\norm{\chi_\mu - \chi_{\mu,\alpha}}_{\infty}\leq 2,$ we get
\begin{align*}
\int_{\{|n|>\mu\}} |n|  %=\int_{\R^d} \chi_\mu n %&\leq M\mu \int_{\R^d} (\chi_\mu - \chi_{\mu,\alpha}) + 2\int_{\{|n|> M \mu\}} |n| + \int_{\R^d} \chi_{\mu,\alpha} n\\
\leq M \mu \alpha \int_{\R^d}|\nabla \chi_\mu| +2\int_{\{|n|> M \mu\}} |n| + \int_{\R^d} \chi_{\mu,\alpha} n.
\end{align*}
Now we choose $\alpha= \mu^{-1/3}$, we multiply by $\mu^{-2/3}$ and then integrate over $\mu\in(0,\infty)$. Then we have
% \begin{align*}
% \mu^{-2/3}\int_{|n|>\mu} |u| &\leq M \int_{\R^d}|\nabla \chi_\mu| + 2\int_{\{|n|> M \mu\}} \mu^{-2/3}|n| + \int_{\R^d} \mu^{-2/3}\chi_{\mu,\alpha} n,
% \end{align*}
% and integrating over $\mu\in(0,\infty)$
\begin{equation}\label{eq: big inequality}
\begin{split}
\int_0^\infty\!\!\mu^{-2/3}\int_{\{|n|>\mu\}}\!\! |n| \leq& \ M \int_0^\infty\!\!\! \int_{\R^d}|\nabla \chi_\mu|\dx{\mu} + 2\int_0^\infty\!\! \mu^{-2/3}\int_{\{|n|> M \mu\}}\!\!|n|\dx{\mu} + \int_0^\infty\!\!\!\int_{\R^d} \mu^{-2/3}\chi_{\mu,\alpha} n\dx{\mu}\\[0.3em]
\leq& \ \underbrace{M \int_0^\infty\!\!\! \int_{\R^d}|\nabla \chi_\mu|\dx{\mu}}_{\mathcal{J}_1} + \underbrace{2\int_0^\infty \mu^{-2/3}\int_{\{|n|> M \mu\}}|n|\dx{\mu}}_{\mathcal{J}_2} \\[0.3em]
& \quad + \underbrace{\left\|\nabla\prt*{\int_0^\infty \mu^{-2/3} \chi_{\mu,\alpha}\dx{\mu}}\right\|_{L^2} \norm{\nabla\varphi}_{L^2}}_{\mathcal{J}_3}.
\end{split}
\end{equation}
Using Fubini's theorem, the left-hand side can be rewritten as 
\begin{align*}
    \int_0^\infty \mu^{-2/3}\int_{\{|n|>\mu\}}|n|\dx{x}\dx{\mu} = \int_{\R^d} |n| \int_0^{|n|} \mu^{-2/3}\dx{\mu}\dx{x} = 3 \int_{\R^d} |n|^{4/3}.
\end{align*}
Now we estimate each term on the right-hand side individually. Using the coarea formula, we find
\begin{align*}
    \mathcal{J}_1= M \int_0^\infty (\text{Per}(\{n>\mu\})+\text{Per}(\{n<-\mu\}))\dx{\mu} = M |n|_{BV},
\end{align*}
where $\text{Per}(A)$ denotes the $(d-1)$-dimensional perimeter of a set $A$. 

The second term, $\mathcal{J}_2$, can be easily manipulated using Fubini's theorem
\begin{align*}
    \mathcal{J}_2 = \int_0^\infty \mu^{-2/3}\int_{\{|n|> M \mu\}}|n|\dx{x}\dx{\mu} = \int_{\R^d} |n| \int_0^{M^{-1}|n|} \mu^{-2/3}\dx{\mu}\dx{x} = 3M^{-1/3} \int_{\R^d} |n|^{4/3}.
\end{align*}
Finally, we estimate the first factor in the term $\mathcal{J}_3$ integrating by parts as follows
\begin{align*}
    \left\|\nabla\prt*{\int_0^\infty \mu^{-2/3} \chi_{\mu,\alpha}\dx{\mu}}\right\|_{L^2}^2 &= \int_0^\infty\int_0^\infty \mu ^{-2/3} \mu'^{-2/3} \int_{\R^d} \nabla \chi_{\mu,\alpha} \cdot \nabla \chi_{\mu',\alpha'} \dx{\mu'}\dx{\mu}\\
    &= 2\int_0^\infty\int_0^\mu \mu ^{-2/3} \mu'^{-2/3} \int_{\R^d} - \Delta \chi_{\mu,\alpha} \ \chi_{\mu',\alpha'} \dx{\mu'}\dx{\mu},
\end{align*}
where in the last inequality we used once again Fubini's theorem. We recall that by definition $\chi_{\mu,\alpha}= \chi_\mu \star \psi_\alpha$ and $\chi_{\mu',\alpha'}= \chi_{\mu'} \star \psi_{\alpha'}$. Then, by Young's convolution inequality we obtain
\begin{align*}
  2\int_0^\infty\int_0^\mu& \mu ^{-2/3} \mu'^{-2/3} \int_{\R^d} - \Delta \chi_{\mu,\alpha} \ \chi_{\mu',\alpha'} \dx{\mu'}\dx{\mu}\\
  = & 2\int_0^\infty\int_0^\mu \mu ^{-2/3} \mu'^{-2/3} \int_{\R^d} \psi_{\alpha'} \star (- \Delta \psi_{\alpha}) \star \chi_{\mu} \chi_{\mu'} \dx{\mu'}\dx{\mu}\\
  \leq & 2\int_0^\infty\int_0^\mu \mu ^{-2/3} \mu'^{-2/3}  \|\psi_{\alpha'}\|_{L^1}\norm{\Delta \psi_{\alpha}}_{L^1}\norm{\chi_{\mu}}_{L^1} \norm{\chi_{\mu'}}_{L^\infty}\dx{\mu'}\dx{\mu}\\
  = & 2 \norm{\Delta \psi}_{L^1}\int_0^\infty\int_0^\mu \mu ^{-2/3} \mu'^{-2/3} \alpha^{-2}  \norm{\chi_{\mu}}_{L^1} \dx{\mu'}\dx{\mu},
\end{align*}
where in the last equality we used $\|\Delta \psi_\alpha\|_{L^1} = \alpha^{-2} \|\Delta \psi\|_{L^1}$.
We recall that $\alpha=\mu^{-1/3}$. Therefore, we have
\begin{align*}
     \left\|\nabla\prt*{\int_0^\infty \mu^{-2/3} \chi_{\mu,\alpha}\dx{\mu}}\right\|_{L^2}^2 & \leq  2 \norm{\Delta \psi}_{L^1}\int_0^\infty\norm{\chi_{\mu}}_{L^1}\int_0^\mu \mu'^{-2/3}\dx{\mu'}  \dx{\mu}\\
     & = 6 \norm{\Delta \psi}_{L^1}\int_0^\infty \mu^{1/3} \int_{\{\mu<|n|\}} 1 \dx{x} \dx{\mu}\\
     & = 6 \norm{\Delta \psi}_{L^1}\int_{\R^d} \int_0^{|n|} \mu^{1/3} \dx{\mu} \dx{x}\\
     & = \frac92 \norm{\Delta \psi}_{L^1} \int_{\R^d} |n|^{4/3}.
\end{align*}
Recombining all the estimates into Eq.~\eqref{eq: big inequality}, we obtain
\begin{align*}
    3 \int_{\R^d} |n|^{4/3} \leq M |n|_{BV} + 6 M^{-1/3} \int_{\R^d} |n|^{4/3} + \prt*{\frac92\norm{\Delta \psi}_{L^1} \int_{\R^d} |n|^{4/3}}^{1/2} \norm{\nabla\varphi}_{L^2}.
\end{align*}
Finally, using Young's inequality on the last term of the right-hand side and choosing $M$ large enough, we are able to recombine the terms to obtain
\begin{align*}
    \int_{\R^d} |n|^{4/3} \leq C \prt*{|n|_{BV} +  \norm{\nabla\varphi}_{L^2}^2},
\end{align*}
which concludes the proof.
\end{proof}
}

\section[Bounding the 2-Wasserstein norm by the dual Sobolev norm]{Bounding $W_2$-norm by the $\dot{H}^{-1}$-norm}
\label{sec: W_2 vs H-1}

We consider here the conservative case of Eq.~\eqref{eq: equation n}, assuming $\int n\g(t) = \int n_\infty(t)=1$.
We put $\dx{\mu\g}=n\g(x)\dx{x}$, $\dx{\mu_\infty}=n_\infty(x)\dx{x}$, ignoring time-dependence for the sake of brevity.
Furthermore we make the additional assumption that $n_\infty\geq\underline{n}>0$ for some constant $\underline{n}$.

Consider the curve $\rho:[0,1]\to\mathcal{P}_2(\R^d)$ given by $\tau\mapsto\rho_\tau:=(1-\tau)\mu\g + \tau\mu_\infty$ together with the vector field
\begin{equation}
    V_\tau(x) = \frac{1}{(1-\tau)n\g(x)+\tau n_\infty(x)}\nabla(\varphi\g-\varphi_\infty).
\end{equation}
For any test function $\psi\in C_{c}^\infty((0,1)\times\R^d)$ we have
\begin{align}
        \int_0^1\int_{\R^d}\frac{\partial\psi}{\partial\tau}\dx{\rho_\tau}(x)\dx{\tau} &= \int_0^1\int_{\R^d}\frac{\partial\psi}{\partial\tau}((1-\tau)n\g(x)+\tau n_\infty(x))\dx{x}\dx{\tau}\\
        &=\int_0^1\int_{\R^d}\psi(n\g(x)-n_\infty(x))\dx{x}\dx{\tau}\\
        &=\int_0^1\int_{\R^d}\nabla\psi\cdot\nabla(\varphi\g-\varphi_\infty)\dx{x}\dx{\tau}\\
        &=\int_0^1\int_{\R^d}\nabla\psi\cdot V_\tau\dx{\rho_\tau}(x)\dx{\tau}.
\end{align}
Therefore the pair $(\rho, V)$ solves the continuity equation
\begin{equation}
    \frac{\partial\rho_{\tau}}{\partial\tau} + \nabla\cdot(V_\tau(x)\rho_{\tau}) = 0,
\end{equation}
posed on $(0,1)\times\R^d$ with the marginal constraints
\begin{equation}
    \rho_0 = \mu\g,\quad \rho_1 = \mu.
\end{equation}
Consequently, from Theorem~5.15 in~\cite{SantambrogioOTAM}, we deduce that $\rho$ is absolutely continuous and the following inequality holds
\begin{equation*}
    |\rho'|(\tau) \leq \norm{V_\tau}_{L^2(\R^d,\dx{\rho_\tau})},
\end{equation*}
where $|\rho'|$ denotes the metric derivative of the curve $\rho$ with respect to the Wasserstein distance.
Furthermore, since $(\mathcal{P}_2(\R^d),W_2)$ is a length space, we have
\begin{equation}
     W_2(\mu\g,\mu_\infty) \leq \int_0^1 |\rho'|(\tau) \dx{\tau}.
\end{equation}
Combining these last two inequalities, we obtain the following bound
% We refer the reader to the monograph~\cite{AmGiSa2005}, in particular to Theorem~8.3.1 therein, which implies the following bound
% \begin{equation}
% \label{eq: AGS}
%     W_2(\mu\g,\mu) \leq \int_0^1\|V(\tau,x)\|_{L^2(\R^d,\dx{\rho_\tau})}\dx{\tau},
% \end{equation}
% where the pair $(\rho_\tau, V)$ solves the continuity equation
% \begin{equation}
%     \frac{\partial\rho_{\tau}}{\partial\tau} + \nabla\cdot(V(\tau,x)\rho_{\tau}) = 0,
% \end{equation}
% posed on $(0,1)\times\R^d$ with the marginal constraints
% \begin{equation}
%     \rho_0 = \mu\g,\quad \rho_1 = \mu.
% \end{equation}
% We claim that the measure $\rho_\tau = (1-\tau)\mu\g + \tau\mu$ satisfies the above problem with
% \begin{equation}
%     V(\tau,x) = \frac{1}{(1-\tau)n\g(x)+\tau n_\infty(x)}\nabla(\varphi\g-\varphi_\infty).
% \end{equation}
% Indeed, let $\psi\in C_{c}^\infty((0,1)\times\R^d)$ be a test function. Then
% \begin{align}
%         \int_0^1\int_{\R^d}\frac{\partial\psi}{\partial\tau}\dx{\rho_\tau}(x)\dx{\tau} &= \int_0^1\int_{\R^d}\frac{\partial\psi}{\partial\tau}((1-\tau)n\g(x)+\tau n_\infty(x))\dx{x}\dx{\tau}\\
%         &=\int_0^1\int_{\R^d}\psi(n\g(x)-n_\infty(x))\dx{x}\dx{\tau}\\
%         &=\int_0^1\int_{\R^d}\nabla\psi\cdot\nabla(\varphi\g-\varphi_\infty)\dx{x}\dx{\tau}\\
%         &=\int_0^1\int_{\R^d}\nabla\psi\cdot V\dx{\rho_\tau}(x)\dx{\tau}.
% \end{align}
% Now, thanks to inequality~\eqref{eq: AGS}, we obtain the bound
\begin{align}
    W_2(\mu\g,\mu_\infty) &\leq \int_0^1\|V_\tau(x)\|_{L^2(\R^d,\dx{\rho_\tau})}\dx{\tau}\\
    &\leq \frac{1}{\sqrt{\underline{n}}}\norm{\nabla(\varphi\g-\varphi_\infty)}_{L^2(\R^d)}\int_0^1\frac{1}{\sqrt{\tau}}\dx{\tau}\\
    &=\frac{2}{\sqrt{\underline{n}}}\norm{n\g-n_\infty}_{\dot{H}^{-1}(\R^d)}.
\end{align}

Interestingly, a reverse bound can also be shown. Rather than a positive lower bound, a common upper bound is now required of all the densities (which is of course the case here).
Let now $\sigma:[0,1]\to\mathcal{P}_2(\R^d)$ be a constant-speed geodesic from $\mu\g$ to $\mu_\infty$ and $E$ be a vector field such that $(\sigma, E)$ satisfy the continuity equation, and $\norm{E_\tau}_{L^2(\R^d;\sigma_\tau)} = W_2(\mu\g,\mu_\infty)$.
Then
\begin{align*}
    \norm{\nabla\varphi\g-\nabla\varphi_\infty}_{L^2}^2 & = \int_{\R^d} (\varphi\g-\varphi_\infty)(n\g-n_\infty)\\[0.3em]
    &=\int_0^1\int_{\R^d} \nabla(\varphi\g-\varphi_\infty)\cdot E_\tau \dx{\rho_\tau}\dx{\tau}\\[0.3em]
    &\leq \frac12\norm{\nabla\varphi\g-\nabla\varphi_\infty}_{L^2}^2 + \frac12 W_2(\mu\g,\mu_\infty)^2.
\end{align*}
We refer the reader to~\cite[Section~5.5.2]{SantambrogioOTAM}, and references therein, for further discussion about the equivalence of the two distances.

\section[Additional estimates in d=2]{Additional estimates in $d=2$}
\label{sec: a-priori for d=2}
To fully justify our main results in two dimensions, we need to derive additional estimates to guarantee that $\nabla\varphi$ is square-integrable. As mentioned in the introduction this is achieved by propagation of entropy and the second moment of the density. %\discuss{The former requires the additional simplification that $g\equiv0$, while the latter can be shown when $g_+$ is bounded.}

\begin{lemma}
Let $n$ be the solution of Eq.~\eqref{eq: equation n} with initial data $n^0\geq 0$ satisfying assumptions~\eqref{eq: data in d=2}. There exists a positive constant $C$, depending only on $T$, such that
\begin{enumerate}[i.]
    \item $\int_{\R^2} |x|^2 \ n(t) \leq C$, for all $t\in[0,T]$,
    \item $\int_{\R^2} n(t)\ln n(t) \leq C$, for all $t\in[0,T]$,
    \item $n|\ln n|\in L^\infty(0,T;L^1(\R^2))$.
\end{enumerate}
\end{lemma}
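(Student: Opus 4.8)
The plan is to establish the second-moment bound~(i) by a direct differential inequality and Grönwall, and then to read off the entropy bounds~(ii)--(iii) from~(i) together with the uniform $L^\infty$ bound on the density provided by Lemma~\ref{lemma: a-priori}. Throughout I write $n_M:=\sup_{\gamma>1}\bigl(\tfrac{\gamma-1}{\gamma}p_M\bigr)^{1/(\gamma-1)}<\infty$ in the power-law case and $n_M:=1$ in the singular case, so that $0\le n\le n_M$, and $M:=\|n(t)\|_{L^1(\R^2)}$, which stays bounded on $[0,T]$ by Lemma~\ref{lemma: a-priori}.

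\textbf{Step 1 (second moment).} I would test Eq.~\eqref{eq: equation n} against the weight $|x|^2$; rigorously one tests against $\min(|x|^2,R^2)$ and passes to the limit $R\to\infty$, absorbing the truncation error using $n\in L^\infty(0,T;L^1(\R^2))$ and a bootstrap on the finiteness of the moment. Using $n\nabla p=\nabla\Phi(n)$, with $\Phi(n):=n^\gamma$ in the power-law case and $\Phi(n):=H\e(n)$ (\cf Eq.~\eqref{eq: H epsilon}) in the singular case, integration by parts gives
\[
    \ddt\int_{\R^2}|x|^2\,n \;=\; 2d\int_{\R^2}\Phi(n)\;-\;2\int_{\R^2}(x\cdot\nabla V)\,n\;+\;\int_{\R^2}|x|^2\,n\,g .
\]
The decisive point is that $\int_{\R^2}\Phi(n)\le p_M\,M$ uniformly: indeed $n^{\gamma-1}\le\tfrac{\gamma-1}{\gamma}p_M$ by Lemma~\ref{lemma: a-priori}, while $H\e(s)=\int_0^s t\,p\e'(t)\,dt\le s\,p\e(s)\le p_M\,s$. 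The reaction term is bounded by $\|g_+\|_{L^\infty}\int_{\R^2}|x|^2 n$, and the drift term is absorbed by the structural assumption on $V$ — immediately by $\|\nabla V\|_{L^\infty}\bigl(M+\int_{\R^2}|x|^2 n\bigr)$ under~\eqref{eq: assumption V BV}, and under~\eqref{eq: assumption V potential} alone at the modest cost of propagating a slightly weighted moment. Hence $M_2(t):=\int_{\R^2}|x|^2 n(t)$ satisfies $M_2'\le C_1+C_2 M_2$, and Grönwall, together with $\int_{\R^2}|x|^2 n^0<\infty$ from~\eqref{eq: data in d=2}, yields~(i).

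\textbf{Step 2 (entropy).} Instead of propagating the entropy directly, I would compare with a Gaussian. Set $G(x):=(2\pi)^{-1}e^{-|x|^2/2}$. The $L^\infty$ bound on $n$ and the finiteness of the second moment from Step~1 ensure $\int_{\R^2} n\ln n$ is finite, and nonnegativity of the relative entropy $\int n\ln\tfrac{n}{MG}\ge0$ (Jensen's inequality for $s\mapsto s\ln s$) rearranges to
\[
    \int_{\R^2} n\ln n \;\ge\; M\ln M-\tfrac12\int_{\R^2}|x|^2 n-M\ln(2\pi) .
\]
Since on the other hand $\int_{\R^2} n(\ln n)_+\le M\,(\ln n_M)_+$ because $n\le n_M$, combining the two bounds and invoking Step~1 controls both $\int n(\ln n)_+$ and $\int n(\ln n)_-=\int n(\ln n)_+-\int n\ln n$ by a constant depending only on $T$; this is exactly~(ii) and~(iii). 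One could alternatively close an entropy estimate directly, testing Eq.~\eqref{eq: equation n} against $1+\ln n$: the diffusion term $-\int\nabla n\cdot\nabla p\le0$ is dissipative and is discarded, the drift yields $\int n\,\Delta V$, and $\int(1+\ln n)\,n\,g$ is handled via $|s\ln s|\le C_\delta\,s^{1-\delta}$ for $0\le s\le1$ and the $L^\infty$ bound for $s\ge1$ — but the Gaussian comparison is shorter and needs less of $V$.

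\textbf{Expected main obstacle.} Everything is soft except the rigour in Step~1: legitimising the test against the unbounded weight $|x|^2$ (truncation plus the a priori $L^1$ control and a bootstrap on the moment) and, above all, controlling the drift term $-2\int(x\cdot\nabla V)n$, which is the only contribution not automatically of lower order. This is precisely where the structural hypotheses on $V$ are used; with the stronger bounds~\eqref{eq: assumption V BV} it is trivially tamed, whereas under~\eqref{eq: assumption V potential} alone one must be a little careful about the admissible weight. The propagation along the flow of the finiteness of $\int|x|^2 n^0$ and $\int n^0\ln n^0$ is otherwise entirely routine.
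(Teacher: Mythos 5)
Your proposal is correct, and while Step~1 coincides with the paper's argument, Step~2 takes a genuinely different (and in fact shorter) route. For the second moment the paper also tests against $|x|^2$ and closes with Gr\"onwall; the only difference is cosmetic: it handles the diffusion term by Young's inequality, $-\int x\cdot n\nabla p \le \tfrac12\int|x|^2n+\tfrac12\int n|\nabla p|^2$, and then invokes the $L^2((0,T)\times\R^2)$ energy bound on $\nabla p$, whereas you integrate by parts once more to get $2d\int\Phi(n)\le 2d\,p_M\|n\|_{L^1}$ --- both are fine, and both proofs share the same unadvertised technicalities (the truncation of the weight $|x|^2$, and which hypothesis on $V$ exactly tames the drift term). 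For the entropy, however, the paper runs a second evolution estimate: it tests the equation against $1+\ln n$, discards the dissipative term $-\int p'(n)|\nabla n|^2$, controls $\int n\ln n\,(g-\gmax)$ by splitting on $\{n<e^{-|x|^2}\}$ and $\{e^{-|x|^2}\le n<1\}$ (which is where the second moment enters), and closes with another Gr\"onwall argument; this uses $\Delta V\in L^1((0,T)\times\R^2)$. You instead observe that (ii) is immediate from $\int n(\ln n)_+\le \|n\|_{L^1}(\ln n_M)_+$ and that the lower bound $\int n\ln n\ge M\ln M-M\ln(2\pi)-\tfrac12\int|x|^2n$ from the nonnegativity of the relative entropy against a Gaussian then yields (iii) by subtraction. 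This is a static argument: it needs nothing from the equation beyond item (i) and the $L^1\cap L^\infty$ bounds of Lemma~\ref{lemma: a-priori}, it dispenses with the hypothesis $\Delta V\in L^1$ at this stage, and it makes transparent that (ii)--(iii) are consequences of (i) rather than parallel propagation estimates. The only point worth making explicit is that the relative-entropy integrand is bounded below by $-G/e\in L^1(\R^2)$, so Jensen's inequality applies in the extended sense and simultaneously forces $\int n(\ln n)_-<\infty$; with that remark your argument is complete.
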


\begin{proof}
Let $\gmax$ denote a uniform upper bound for the reaction term $g$, and $n_H$ a uniform upper bound for the density.
Let us recall the equation for $n$, \ie
\begin{equation*}
    \partial_t n = \nabla \cdot (n \nabla p + n \nabla V).
\end{equation*}
Upon multiplying by $|x|^2/2$, integrating in space and using Young's inequality, we obtain
\begin{align*}
    \ddt \int_{\R^2} \frac{|x|^2}{2} n %&=  \int_{\R^2} \frac{|x|^2}{2} \nabla \cdot (n \nabla p + n \nabla V) \\
    &= -  \int_{\R^2} x \cdot (n \nabla p + n \nabla V) + \int_{\R^d} \frac{|x|^2}{2}ng\\
    &\leq  (1+\gmax)\int_{\R^2} |x|^2 n +  \frac{1}{2}  \int_{\R^2} n |\nabla p|^2 + \frac{1}{2}  \int_{\R^2} n |\nabla V|^2.
\end{align*}
Hence
\begin{align*}
    \ddt \int_{\R^2} |x|^2 n \leq  2(1+\gmax)\int_{\R^2} |x|^2 n +   n_H \prt*{\int_{\R^2} |\nabla p|^2 +  \int_{\R^2} |\nabla V|^2}.
\end{align*}
By Gronwall's lemma we infer
\begin{align*}
   \int_{\R^2} |x|^2 n(t) \leq  e^{Ct}\int_{\R^2} |x|^2 n^0 +   n_H \int_0^t e^{C(t-s)} \prt*{\|\nabla p(s)\|^2_{L^2(\R^2)} +  \|\nabla V(s)\|^2_{L^2(\R^2)} }\dx{s},
\end{align*}
and since both $\nabla p$ and $\nabla V$ are bounded in $L^2((0,T)\times \R^2)$, we conclude that
\begin{align*}
  \int_{\R^2} |x|^2 n(t) \leq C(T),
\end{align*}
for all $t\in [0,T].$

Assume now additionally that the initial entropy is finite and multiply the equation by $(n \ln n)'$ to obtain
\begin{align*}
  \ddt  \int_{\R^2} n\ln n %&= \int_{\R^2}(n \ln n)' \nabla \cdot (n \nabla p + n \nabla V)\\
  &= - \int_{\R^2} p'(n) |\nabla n|^2 - \int_{\R^2} \nabla n \cdot \nabla V + \int_{\R^2} (1+\ln n)n\, g\\
  &\leq - \int_{\R^2} \nabla n \cdot \nabla V+ \int_{\R^2} (1+\ln n)n\, g\\
  &= \int_{\R^2} n (\Delta V + g) + \gmax\int_{\R^2} n\ln n + \int_{\R^2} n\ln n\, (g-\gmax).
\end{align*}
We can control the very last term as follows.
\begin{align}
    \int_{\R^2} n\ln n\, (g-\gmax) &\leq \int_{\{n<1\}} n|\ln n|\, (\gmax-g)\\
    &= \int_{\{n<e^{-|x|^2}\}}n|\ln n|\, (\gmax-g) + \int_{{\{e^{-|x|^2}<n<1\}}}n|\ln n|\, (\gmax-g)\\
    &\leq 2\gmax \int_{\R^2}e^{-|x|^2/2} + 2\gmax\int_{\R^2}|x|^2n \leq C.
\end{align}
Hence, since $\Delta V \in L^1((0,T)\times\R^2)$, we have
\begin{equation*}
    \ddt \int_{\R^2} n\ln n \leq \gmax\int_{\R^2} n \ln n + C, %\norm{n}_{L^\infty((0,T)\times R^2)} \|\Delta V\|_{L^1((0,T)\times R^2)}.
\end{equation*}
and the result follows by Gronwall's lemma.
The final part of the lemma follows easily from the preceding two.
Indeed it is enough to show that $n|\ln{n}|$ has finite integral on the set $
\{n\leq 1\}$, and this is done exactly as above
\[
\int_{\{n<1\}} n|\ln n| \leq \int_{\R^2}e^{-|x|^2/2} + \int_{\R^2}|x|^2n < \infty.
\]
%
% \ND{I put here the proof just in case}
%
% Let us denote $\tilde{n}:= n \mathds{1}_{\{n\le 1\}}$ and $m:= \int \tilde{n} \leq M,$ where $M$ is the total mass, \ie \ $M=\int n$. Let $\mu$ be the Gaussian distribution, \ie \ $\mu(x):=1/2\pi e^{-|x^2|/2}$ and $\dx{\mu(x)} = \mu(x)\dx{x}$. We define $N:= \tilde{n}/\mu$ and hence we have
% \begin{align*}
%     \int_{\R^2} \tilde{n} \prt*{\ln \tilde{n} + \frac{1}{2}|x^2|} \dx{x} = & \int_{\R^2} N \ln N \dx{\mu} + \int_{\R^2} N \mu \ln \mu \dx{x} + \int_{\R^2} N \mu \frac 12 |x|^2 \dx{2}\\
%     = & \int_{\R^2} N \ln N \dx{\mu} - m \ln (2\pi).
% \end{align*}
% Jensen's inequality yields
% \begin{equation*}
%     \int_{\R^2} N \ln N \dx{\mu} \ge \prt*{\int_{\R^2} N \dx{\mu}} \ln\prt*{\int_{\R^2} N \dx{\mu}} = m \ln m.
% \end{equation*}
% Therefore, we obtain
% \begin{equation*}
%     \int_{\R^2} \tilde{n} \ln \tilde{n} \dx{x} \ge m \ln\prt*{\frac{m}{2\pi}} - \frac 12 \int_{\R^2} \tilde{n}|x|^2 \dx{x} \ge - \frac 1 e - M \ln(2\pi) - \frac 12 \int_{\R^2} |x|^2 \tilde{n}\dx{x},
% \end{equation*}
% hence, by point \textit{(i)} we have
% \begin{equation*}
%     -\int_{\R^2} \tilde{n} \ln \tilde{n} \dx{x} \le C(M).
% \end{equation*}
% Since $n\geq 0$, we can write
% \begin{equation*}
%     \int_{\R^2} n |\ln n| = \int_{R^2} n \ln n + 2 \int_{\R^2} n |\ln n|_- =  \int_{R^2} n \ln n + 2 \int_{\R^2} \tilde{n} \ln \tilde{n}  ,
% \end{equation*}
% and by point \textit{(ii)} we conclude.
\end{proof}

Now we recall the logarithmic Hardy-Littlewood-Sobolev inequality. 
    \begin{lemma}[Logarithmic Hardy-Littlewood-Sobolev inequality]
        \label{lemma: log-HLS}
  Let $n$ be nonnegative, with $n\in L^\infty(0, T; L^1(\R^d))$ and $n\ln n \in L^\infty(0,T; L^1(\R^d))$.
  If $\int_{\R^2}n = M$, then
    \begin{equation}\label{eq: hls inequality}
        \int_{\R^2} n \ln n  + \frac{2}{M} \int_{\R^2}  \int_{\R^2} n(x) n(y) \ln|x-y| \ge -C(M),
    \end{equation}
    with $C(M) = M(1+\ln\pi - \ln M)$.   
    \end{lemma}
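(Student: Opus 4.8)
The plan is to derive~\eqref{eq: hls inequality} from the \emph{sharp} two-dimensional logarithmic Hardy--Littlewood--Sobolev inequality. First I would check that the left-hand side is a well-defined element of $(-\infty,+\infty]$: only the negative part of $\iint n(x)n(y)\ln|x-y|$, supported in $\{|x-y|<1\}$, needs attention, and applying the Young-type inequality $ab\le a\ln a+e^{b-1}$ (the Legendre dual of $a\mapsto a\ln a$) with $a=n(y)$, $b=\ln|x-y|^{-1}$ gives, uniformly in $x$,
\begin{equation*}
    \int_{\{|x-y|<1\}} n(y)\ln\frac{1}{|x-y|}\,\dx{y}\ \le\ \int_{\R^2}(n\ln n)_+ + e^{-1}\int_{\{|z|<1\}}\frac{\dx{z}}{|z|}\ =\ \int_{\R^2}(n\ln n)_+ + \frac{2\pi}{e}.
\end{equation*}
Multiplying by $n(x)$ and integrating, the negative part of the double integral is at most $M\bigl(\|(n\ln n)_+\|_{L^1(\R^2)}+2\pi/e\bigr)<\infty$, so only the case of a finite double integral has to be treated. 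Next I would reduce to unit mass: writing $n=Mg$ with $\int_{\R^2}g=1$, an elementary computation gives
\begin{equation*}
    \int_{\R^2}n\ln n+\frac{2}{M}\iint n(x)n(y)\ln|x-y|\ =\ M\,\Phi(g)+M\ln M,\qquad \Phi(g):=\int_{\R^2}g\ln g+2\iint g(x)g(y)\ln|x-y|,
\end{equation*}
so that~\eqref{eq: hls inequality} is equivalent to the scale-free bound $\Phi(g)\ge-(1+\ln\pi)$, which yields $M\Phi(g)+M\ln M\ge-M(1+\ln\pi)+M\ln M=-C(M)$.

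For the normalized bound $\Phi(g)\ge-(1+\ln\pi)$ I would invoke the sharp logarithmic HLS inequality of Carlen and Loss~\cite{CarlenLoss1992} (see also Beckner~\cite{Beckner1993}), whose extremizers are the translates and dilates of $g_\ast(x)=\pi^{-1}(1+|x|^2)^{-2}$. For a self-contained argument I would instead pass to the limit in Lieb's sharp HLS inequality~\cite{Lieb1983}: for $0<\lambda<2$ and $\int g=1$, $\iint g(x)g(y)|x-y|^{-\lambda}\le\frac{\pi^{\lambda/2}}{1-\lambda/2}\,\|g\|_{L^{4/(4-\lambda)}(\R^2)}^{2}$, with extremizers $\propto(1+|x|^2)^{-(4-\lambda)/2}$; subtracting $1$, dividing by $\lambda$ and letting $\lambda\downarrow0$ — using $\lambda^{-1}(1-|x-y|^{-\lambda})\to\ln|x-y|$, $\|g\|_{L^{4/(4-\lambda)}}^{2}=\exp(\tfrac{\lambda}{2}\int g\ln g+o(\lambda))$ and $\pi^{\lambda/2}(1-\lambda/2)^{-1}=1+\tfrac{\lambda}{2}(1+\ln\pi)+o(\lambda)$ — one obtains $-\iint g(x)g(y)\ln|x-y|\le\tfrac12\bigl(1+\ln\pi+\int g\ln g\bigr)$, i.e.\ $\Phi(g)\ge-(1+\ln\pi)$, the bound of the previous paragraph supplying the domination needed to pass to the limit on the left. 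As an independent check of the constant I would evaluate $\Phi$ on $g_\ast$: one verifies $\ln g_\ast=4\psi_\ast-\ln\pi$, where $\psi_\ast(x):=-\int g_\ast(y)\ln|x-y|\,\dx{y}$ is the logarithmic potential of $g_\ast$ (both sides solve $-\Delta u=8\pi g_\ast$ and have the same logarithmic decay, so $\ln g_\ast-4\psi_\ast$ is a bounded harmonic function, hence constant, and the constant $-\ln\pi$ is read off at the origin, where $\psi_\ast(0)=0$); integrating this identity against $g_\ast$, together with $\int g_\ast\ln g_\ast=-2-\ln\pi$, forces $\iint g_\ast(x)g_\ast(y)\ln|x-y|=\tfrac12$, hence $\Phi(g_\ast)=-(1+\ln\pi)$.

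The whole difficulty sits in the normalized sharp inequality. Symmetric decreasing rearrangement does lower $\Phi$ — the entropy $\int g\ln g$ is rearrangement-invariant, and $-\iint g(x)g(y)\ln|x-y|$ increases under rearrangement by Riesz's inequality applied to truncations of the sign-indefinite kernel $-\ln|\cdot|$ — but the dilation invariance of $\Phi$ prevents a minimizing sequence of radial profiles from converging, so one is forced to interleave rearrangement with a conformal (M\"obius) transformation after transporting the problem to $S^2$; this ``competing symmetries'' mechanism is the technical heart of~\cite{CarlenLoss1992}, and also underlies Lieb's HLS theorem used in the alternative route. Finally, I would remark that the applications in this paper only need \emph{some} finite upper bound for $\int_{\R^2} n\varphi=-\frac{1}{2\pi}\iint n(x)n(y)\ln|x-y|$, and that the crude estimate of the first paragraph already delivers $\int_{\R^2} n\varphi\le\frac{1}{2\pi}\bigl(\|(n\ln n)_+\|_{L^1(\R^2)}+2\pi/e\bigr)\|n\|_{L^1(\R^2)}$; the sharp constant $C(M)=M(1+\ln\pi-\ln M)$, though not strictly required downstream, is what makes~\eqref{eq: hls inequality} a clean, scale-covariant statement.
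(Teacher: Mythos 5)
The paper does not actually prove this lemma --- it is simply ``recalled'' as a classical result (the sharp logarithmic Hardy--Littlewood--Sobolev inequality of Carlen--Loss and Beckner) and used as a black box, so there is no in-paper argument to compare against. Your proposal is a correct and essentially complete account of how the stated form follows from that classical result, and every quantitative detail checks out: the Young-type bound $ab\le a\ln a+e^{b-1}$ does give absolute convergence of the negative part of the double integral under the hypothesis $n\ln n\in L^1$; the rescaling $n=Mg$ correctly identifies $C(M)=M(1+\ln\pi-\ln M)$ as the scale-covariant avatar of the unit-mass bound $\Phi(g)\ge-(1+\ln\pi)$; the sharp HLS constant $\pi^{\lambda/2}/(1-\lambda/2)$ in $d=2$ is right, and the $\lambda\downarrow0$ differentiation reproduces the constant $1+\ln\pi$; and the consistency check on $g_\ast=\pi^{-1}(1+|x|^2)^{-2}$ (via $\psi_\ast=-\tfrac12\ln(1+|x|^2)$, $\int g_\ast\ln g_\ast=-2-\ln\pi$, $\iint g_\ast g_\ast\ln|x-y|=\tfrac12$) is correct. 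Two honest caveats, both of which you essentially flag yourself: the expansion $\|g\|_{L^{4/(4-\lambda)}}^2=\exp(\tfrac{\lambda}{2}\int g\ln g+o(\lambda))$ requires $g\in L^p$ for some $p>1$, which does not follow from $g,\,g\ln g\in L^1$ alone, so the Lieb-limit route needs an extra truncation/density step (or one simply quotes Carlen--Loss directly, as the paper implicitly does); and the genuinely hard content --- the sharp inequality itself --- is invoked, not proved. Your closing observation is also the right one for this paper: the application only needs \emph{some} finite upper bound on $\int n\varphi$, and your elementary first-paragraph estimate already delivers that without any sharp constant, so inequality~\eqref{eq: hls inequality} is stated in sharp form mainly for cleanliness.
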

Recalling that $\varphi = -\frac{1}{2\pi}\ln|\cdot|\star n$, the inequality implies 
    \begin{equation*} 
     \int_{\R^2} |\nabla \varphi|^2 =   \int_{\R^2} n\varphi \leq \frac{M}{2} C +\frac{M}{2} \int_{\R^2} n \ln n < \infty.
    \end{equation*}

\commentout{    
\section{Convergence rate of the Barenblatt's profile} 
Let us recall the explicit formula of the Barenblatt's profile, which represents the unit source solution of the classical PME (without reaction terms)
\begin{equation*}
    U_\gamma(t,x;C)= t^{-\alpha}\prt*{C- k t^{-2d/\alpha} |x|^2}_+^{1/(\gamma-1)},
\end{equation*}
where
\begin{equation*}
    \alpha= \frac{d}{d(\gamma-1)+2}, \qquad k=\frac{(\gamma-1)\alpha}{2 d \gamma},
\end{equation*}
and $C$ is a free parameter related to the total mass $M$. Since we consider the unit source solution we take $M=1$, and $C$ is given by
\begin{equation*}
    C = \left[\pi^{-d/2} k^{d/2} \cdot\dfrac {\Gamma\prt*{  \dfrac{\gamma}{\gamma-1} + \frac d 2}}{\Gamma\prt*{\dfrac{\gamma}{\gamma-1}} }\right]^{1/\delta} \qquad\text{with} \quad \delta = \frac{1}{\gamma-1} + \frac d 2.
\end{equation*}
For the sake of simplicity we consider the case of dimension one. 

Without loss of generality, we fix the profile at time $t=1$.
\begin{equation*}
     U_\gamma(1,x;C)=\prt*{C- k |x|^2}_+^{1/(\gamma-1)},
\end{equation*}
The radius of the support, $r_\gamma=\sqrt{C/k}$, converges to $1/2$ as $\gamma\rightarrow\infty$.
The limit profile is stationary and is given by
\begin{equation*}
    U_\infty(x)= \mathds{1}_{[-\frac 12, \frac 12]}(x).
\end{equation*}
We compute the $L^1$-norm of the difference
\begin{align*}
    \|U_\gamma - U_\infty \|_{L^1(\R^d)} = \int_{\R^d} |U_\gamma-U_\infty| = 2 \int_0^{\frac 12} (1-U_\gamma) + 2  \int_{\frac 12}^{\sqrt{\frac C k}} U_\gamma=1+ 2 \int_{\frac 12}^{\sqrt{\frac C k}} U_\gamma - 2 \int_0^{\frac 12} U_\gamma.
\end{align*}
We easily compute
\begin{equation*}
    \int_{\frac 12}^{\sqrt{\frac C k}} U_\gamma = %\int_{\frac 12}^{\sqrt{\frac C k}} U_\gamma \prt*{C - k |x|^2}^{1/(\gamma-1)} =
    C^{\frac{1}{\gamma-1}}\int_{\frac 12}^{\sqrt{\frac C k}} \prt*{1- \frac k C |x|^2}^{1/(\gamma-1)}.
\end{equation*}
By the change of variables $z=\sqrt{\frac{k}{C}}x$ we find
\begin{align*}
    \int_{\frac 12}^{\sqrt{\frac C k}} U_\gamma =&  C^{\frac{1}{\gamma-1}} \sqrt{\frac C k} \int_{\frac 12 \sqrt{\frac k C}}^1 \prt*{1-t^2}^{1/(\gamma-1)}\\
    =& C^{\frac{1}{\gamma-1}}\sqrt{\frac C k} \left[z \ F \left(\frac 12, \frac{1}{1-\gamma}, \frac 32, z^2\right)\right]_{\frac{1}{2}\sqrt{\frac k C}}^{1}\\
   =&  C^{\frac{1}{\gamma-1}}\sqrt{\frac C k} \left[F \left(\frac 12, \frac{1}{1-\gamma}, \frac 32, 1\right) - \frac 12 \sqrt{\frac k C} F \left(\frac 12, \frac{1}{1-\gamma}, \frac 32, \frac{k}{4C}\right)  \right]\\
   =& C^{\frac{1}{\gamma-1}}\sqrt{\frac C k} \left[\frac{\sqrt{\pi}}{2}\dfrac{\Gamma(\frac{\gamma}{\gamma-1})}{\Gamma(\frac{\gamma}{\gamma-1}+\frac 32)} - \frac 12 \sqrt{\frac k C} F \left(\frac 12, \frac{1}{1-\gamma}, \frac 32, \frac{k}{4C}\right)  \right],
\end{align*}
where the hypergeometric function $F$ is given by
\begin{equation}\label{hypergeo}
    F(a,b,c,z) := \sum_{k=0}^\infty \frac{a_k b_k}{c_k} \frac{z^k}{k!}, \qquad \text{with} \quad (a)_k := \frac{\Gamma(a+n)}{\Gamma(a)}.
\end{equation}
Analogously, we have
\begin{equation*}
      \int_{0}^{\frac 12} U_\gamma = \frac 12 C^{\frac{1}{\gamma-1}}\sqrt{\frac C k} F \left(\frac 12, \frac{1}{1-\gamma}, \frac 32, \frac{k}{4C}\right).
\end{equation*}
Therefore 
\begin{equation*}
    \|U_\gamma - U_\infty\|_{L^1} = 1 +  C^{\frac{1}{\gamma-1}}\sqrt{\frac C k}
    \sqrt{\pi}\dfrac{\Gamma\prt*{\frac{\gamma}{\gamma-1}}}{\Gamma\prt*{\frac{\gamma}{\gamma-1}+\frac 32 }} - 2 C^{\frac{1}{\gamma-1}} F \left(\frac 12, \frac{1}{1-\gamma}, \frac 32, \frac{k}{4C}\right).
\end{equation*}
It is easy to see that
\[
C^{\frac{1}{\gamma-1}}\sqrt{\frac C k}
    \sqrt{\pi}\dfrac{\Gamma\prt*{\frac{\gamma}{\gamma-1}}}{\Gamma\prt*{\frac{\gamma}{\gamma-1}+\frac 32 }} = 1.
\]
We call $A(k)$ the coefficient of the series \eqref{hypergeo}, and then we obtain
\begin{equation*}
    F \left(\frac 12, \frac{1}{1-\gamma}, \frac 32, \frac{k}{4C}\right) = A(0) + A(1) \frac{k}{4C} + \mathcal{E} = 1 - \frac{k}{12 C} \frac{1}{\gamma -1} + \mathcal{E}
\end{equation*}
Since 
\[
A(k)= \dfrac{\Gamma(\frac 12 +k)}{\Gamma(\frac 12)} \dfrac{\Gamma(\frac{1}{1-\gamma} +k)}{\Gamma(\frac{1}{1-\gamma})} \dfrac{\Gamma(\frac 32)}{\Gamma(\frac 32 +k)}\approx \frac{1}{1-\gamma},
\]
the error $\mathcal{E}$ also behaves like $ 1/1-\gamma$, thus
\begin{equation*}
    F \left(\frac 12, \frac{1}{1-\gamma}, \frac 32, \frac{k}{4C}\right) \approx 1 + \overline{C}  \frac{1}{\gamma -1}.
\end{equation*}
Let us notice that $C^{\frac{1}{\gamma-1}}$ converges to 1 as
\begin{equation*}
    C^{\frac{1}{\gamma-1}}\approx 1 + \frac{\ln \gamma}{\gamma-1}.
\end{equation*}
Hence, we finally have
\begin{equation*}
    \|U_\gamma - U_\infty\|_{L^1} = 2 \prt*{1- C^{\frac{1}{\gamma-1}}\prt*{ 1 + \overline{C}  \frac{1}{\gamma -1}}}\approx \frac{\ln \gamma}{\gamma-1}.
\end{equation*}
}
\end{appendices}

\bibliographystyle{abbrv}
\bibliography{TDebiec_bib}

\begin{thebibliography}{10}

\bibitem{AlexanderKimYao2014}
D.~Alexander, I.~Kim, and Y.~Yao.
\newblock Quasi-static evolution and congested crowd transport.
\newblock {\em Nonlinearity}, 27(4):823--858, 2014.

\bibitem{Degond_M3AS2008}
F.~Berthelin, P.~Degond, V.~L. Blanc, S.~Moutari, M.~Rascle, and J.~Royer.
\newblock A traffic-flow model with constraints for the modelling of traffic
  jams.
\newblock {\em Mathematical Models and Methods in Applied Sciences},
  18(supp01):1269--1298, aug 2008.

\bibitem{Degond_ARMA2008}
F.~Berthelin, P.~Degond, M.~Delitala, and M.~Rascle.
\newblock A model for the formation and evolution of traffic jams.
\newblock {\em Archive for Rational Mechanics and Analysis}, 187(2):185--220,
  nov 2007.

\bibitem{BPPS}
F.~Bubba, B.~Perthame, C.~Pouchol, and M.~Schmidtchen.
\newblock Hele-{S}haw limit for a system of two reaction-(cross-)diffusion
  equations for living tissues.
\newblock {\em Arch. Rational. Mech. Anal.}, 236:735--766, 2020.

\bibitem{CaffarelliFriedman1987}
L.~A. Caffarelli and A.~Friedman.
\newblock Asymptotic behavior of solutions of $u_t= {{\Delta}} u^m$ as $m\to
  \infty$.
\newblock {\em Indiana University Mathematics Journal}, 36(4):711--728, 1987.

\bibitem{CFSS}
J.~A. Carrillo, S.~Fagioli, F.~Santambrogio, and M.~Schmidtchen.
\newblock Splitting schemes and segregation in reaction cross-diffusion
  systems.
\newblock {\em SIAM Journal on Mathematical Analysis}, 50(5):5695--5718, 2018.

\bibitem{CintiOtto2016}
E.~Cinti and F.~Otto.
\newblock Interpolation inequalities in pattern formation.
\newblock {\em Journal of Functional Analysis}, 271(11):3348--3392, 2016.

\bibitem{Cohen_2003}
A.~Cohen, W.~Dahmen, I.~Daubechies, and R.~DeVore.
\newblock Harmonic analysis of the space {BV}.
\newblock {\em Revista Matem{\'{a}}tica Iberoamericana}, pages 235--263, 2003.

\bibitem{CraigKimYao2018}
K.~Craig, I.~Kim, and Y.~Yao.
\newblock Congested aggregation via {N}ewtonian interaction.
\newblock {\em Arch. Ration. Mech. Anal.}, 227(1):1--67, 2018.

\bibitem{DavidPerthame2021}
N.~David and B.~Perthame.
\newblock Free boundary limit of a tumor growth model with nutrient.
\newblock {\em Journal de Math{\'e}matiques Pures et Appliqu{\'e}es}, 2021.

\bibitem{DavidSchmidtchen2021}
N.~David and M.~Schmidtchen.
\newblock On the incompressible limit for a tumour growth model incorporating
  convective effects.
\newblock {\em arXiv:2103.02564}, 2021.

\bibitem{DebiecEtAl2021}
T.~D{\k e}biec, B.~Perthame, M.~Schmidtchen, and N.~Vauchelet.
\newblock Incompressible limit for a two-species model with coupling through
  {B}rinkman's law in any dimension.
\newblock {\em Journal de Math{\'e}matiques Pures et Appliqu{\'e}es},
  145:204--239, 2021.

\bibitem{DebiecSchmidtchen2020}
T.~D{\k e}biec and M.~Schmidtchen.
\newblock Incompressible limit for a two-species tumour model with coupling
  through {B}rinkman's law in one dimension.
\newblock {\em Acta Applicandae Mathematicae}, 169(1):593--611, 2020.

\bibitem{DegondHua2013}
P.~Degond and J.~Hua.
\newblock Self-organized hydrodynamics with congestion and path formation in
  crowds.
\newblock {\em Journal of Computational Physics}, 237:299--319, 2013.

\bibitem{GilQuiros2001}
O.~Gil and F.~Quir{\'o}s.
\newblock Convergence of the porous media equation to hele-shaw.
\newblock {\em Nonlinear Analysis: Theory, Methods \& Applications},
  44(8):1111--1131, 2001.

\bibitem{GilQuiros2003}
O.~Gil and F.~Quir{\'o}s.
\newblock Boundary layer formation in the transition from the porous media
  equation to a {H}ele--{S}haw flow.
\newblock {\em Annales de l'Institut Henri Poincar{\'e} C, Analyse non
  lin{\'e}aire}, 20(1):13--36, 2003.

\bibitem{GKM2020}
N.~Guillen, I.~Kim, and A.~Mellet.
\newblock A {H}ele-{S}haw limit without monotonicity.
\newblock {\em ArXiv preprint arXiv: 2012.02365}, 2020.

\bibitem{GwiazdasPerthame}
P.~Gwiazda, B.~Perthame, and A.~{Świerczewska-Gwiazda}.
\newblock A two-species hyperbolic–parabolic model of tissue growth.
\newblock {\em Communications in Partial Differential Equations},
  44(12):1605--1618, 2019.

\bibitem{HechtVauchelet2017}
S.~Hecht and N.~Vauchelet.
\newblock Incompressible limit of a mechanical model for tissue growth with
  non-overlapping constraint.
\newblock {\em Communications in Mathematical Sciences}, 15(7):1913--1932,
  2017.

\bibitem{KPW2019}
I.~Kim, N.~Požár, and B.~Woodhouse.
\newblock Singular limit of the porous medium equation with a drift.
\newblock {\em Advances in Mathematics}, 349:682--732, 2019.

\bibitem{Kpo}
I.~Kim and N.~Po\v{z}\'ar.
\newblock Porous medium equation to {H}ele-{S}haw flow with general initial
  density.
\newblock {\em Trans. Amer. Math. Soc.}, 370(2):873--909, 2018.

\bibitem{LiuXu2021}
J.-G. Liu and X.~Xu.
\newblock Existence and incompressible limit of a tissue growth model with
  autophagy.
\newblock {\em arXiv:2102.03844v3}, 2021.

\bibitem{MelletPerthameQuiros2017}
A.~Mellet, B.~Perthame, and F.~Quir{\'o}s.
\newblock A {H}ele--{S}haw problem for tumor growth.
\newblock {\em Journal of Functional Analysis}, 273(10):3061--3093, 2017.

\bibitem{PerrinZatorska2015}
C.~Perrin and E.~Zatorska.
\newblock Free/congested two-phase model from weak solutions to
  multi-dimensional compressible navier-stokes equations.
\newblock {\em Communications in Partial Differential Equations},
  40(8):1558--1589, jun 2015.

\bibitem{PQV}
B.~Perthame, F.~Quir{\'o}s, and J.~L. V{\'a}zquez.
\newblock The {H}ele-{S}haw asymptotics for mechanical models of tumor growth.
\newblock {\em Arch. Ration. Mech. Anal.}, 212(1):93--127, 2014.

\bibitem{PV2015}
B.~Perthame and N.~Vauchelet.
\newblock Incompressible limit of a mechanical model of tumour growth with
  viscosity.
\newblock {\em Philos. Trans. Roy. Soc. A}, 373(2050):20140283, 16, 2015.

\bibitem{SantambrogioOTAM}
F.~Santambrogio.
\newblock {\em Optimal transport for applied mathematicians}.
\newblock Progress in Nonlinear Differential Equations and Their Applications.
  Birkh\"{a}user, NY, 2015.

\bibitem{TVCVDP}
M.~Tang, N.~Vauchelet, I.~Cheddadi, I.~Vignon-Clementel, D.~Drasdo, and
  B.~Perthame.
\newblock Composite waves for a cell population system modeling tumor growth
  and invasion.
\newblock {\em Chin. Ann. Math. Ser. B}, 34(2):295--318, 2013.

\end{thebibliography}

\end{document}